\newcommand{\Z}{\mathbb{Z}}
\newcommand{\N}{\mathbb{N}}
\newcommand{\WP}{\mathrm{WP}}
\newcommand{\Sym}{\mathrm{Sym}}
\newcommand{\lang}{\mathcal{L}}
\newcommand{\dom}{\mathrm{dom}}
\newcommand{\AU}{\mathcal{S}}
\newcommand{\TT}{\mathcal{T}}
\newcommand{\PP}{\mathcal{P}}
\begin{document}
\title{Four heads are better than three\thanks{Research supported by Academy of Finland grant 2608073211.}}
%
%\titlerunning{Abbreviated paper title}
% If the paper title is too long for the running head, you can set
% an abbreviated paper title here
%
\author{Ville Salo\inst{1}\orcidID{0000-0002-2059-194X}}
\authorrunning{V. Salo}
% First names are abbreviated in the running head.
% If there are more than two authors, 'et al.' is used.
%
\institute{University of Turku}
\maketitle              % typeset the header of the contribution
\begin{abstract}
We construct recursively-presented finitely-generated torsion groups which have bounded torsion and whose word problem is conjunctive equivalent (in particular positive and Turing equivalent) to a given recursively enumerable set. These groups can be interpreted as groups of finite state machines or as subgroups of topological full groups, on effective subshifts over other torsion groups. We define a recursion-theoretic property of a set of natural numbers, called impredictability. It roughly states that a Turing machine can enumerate numbers such that every Turing machine occasionally incorrectly guesses (by either halting or not) whether they are in the set, even given an oracle for a prefix of the set. We prove that impredictable recursively enumerable sets exist. Combining these constructions and slightly adapting a result of [Salo and T\"orm\"a, 2017], we obtain that four-headed group-walking finite-state automata can define strictly more subshifts than three-headed automata on a group containing a copy of the integers, confirming a conjecture of [Salo and T\"orm\"a, 2017]. These are the first examples of groups where four heads are better than three, and they show the maximal height of a finite head hierarchy is indeed four.

\keywords{torsion groups \and finite-state automata \and subshifts \and recursion theory.}
\end{abstract}

\section{Introduction}

In this paper, we combine construction techniques from group theory and recursion theory to verify a conjecture  the author and T\"orm\"a made in \cite{SaTo17} about group-walking automata. The a priori motivation was this conjecture, but the constructions and definitions we give may be of independent interest.

\subsection{The results}

\begin{theorem}
\label{thm:IntroConstruction}
Let $A$ be $\Sigma^0_1$. Then there exists a recursively presented torsion group with bounded torsion, whose word problem is Turing equivalent to $A$.
\end{theorem}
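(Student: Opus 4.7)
The plan is to realize $G$ as a finitely generated subgroup of the topological full group of an effective subshift over a fixed torsion base group, as the abstract suggests. First I would fix a finitely generated torsion group $H$ with bounded exponent and decidable word problem, to serve as the base whose translations will be locally patched to form elements of $G$. Given a Turing machine enumerating $A$, I would then construct an effective (i.e.\ $\Pi^0_1$) subshift $X \subseteq \Sigma^H$ whose forbidden patterns record the enumeration, so that configurations of $X$ encode partial computations of the enumerator and membership in $A$ becomes visible in the combinatorics of $X$.

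The group $G$ would then be defined as the subgroup of the topological full group of $H \curvearrowright X$ generated by finitely many explicit \emph{finite state machine} homeomorphisms: each generator acts on $x \in X$ as $x \mapsto h(x) \cdot x$, where $h(x) \in H$ is determined by a finite-state rule reading a window of $x$. Finite generation is immediate, and recursive presentation follows from the effectiveness of $X$, since equality of two words in $G$ reduces to a coincidence condition on their actions that is r.e.\ in $A$. Bounded torsion is enforced by choosing generators to have a common order dividing a fixed $N$ and arranging the local rules so that all compositions also cycle within this bound.

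For the reduction $\WP(G) \leq_T A$, I would observe that the forbidden patterns of $X$ are enumerable from $A$, so whether a word acts trivially on $X$ can be tested with oracle $A$; hence $\WP(G)$ is decidable relative to $A$. For the reverse reduction, I would design the encoding so that for each $n$ there is an effectively computable word $w_n$ in the generators of $G$ with $w_n =_G 1$ if and only if $n \in A$; concretely, $w_n$ attempts to traverse $X$ along a trajectory that closes up precisely when the enumerator confirms $n \in A$. Together these give Turing equivalence, and a closer inspection of the shape of the $w_n$ should yield the conjunctive reduction promised in the abstract.

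The main obstacle I foresee is combining bounded torsion with enough expressivity for the encoding: bounded-torsion of generators does not entail bounded-torsion of the whole group, so the interplay between the finite-state rule primitives and the base group $H$ must be carefully designed. The likely trick is to keep all group-theoretic motion inside a bounded-exponent subgroup of $H$ while pushing the computational content into the combinatorial structure of $X$, so that the torsion bound is geometric rather than arithmetic and cannot be inflated by compositions.
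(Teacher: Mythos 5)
Your proposal is in the same spirit as the paper's construction (subshifts over torsion groups, finite-state-machine/topological-full-group homeomorphisms, reductions via effectively given test words), but it misses the key structural device that makes the torsion bound work, and the fix you sketch for that gap would not succeed.

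The paper's group $K(G,A,H)$ acts on a \emph{product} $X_A\times H$ where two torsion groups play different roles. The torsion group $G$ (your ``base'' $H$) acts on a very simple effective $G$-subshift $X_A$ — a generalized sunny-side-up in which two $1$s may appear only at distance outside $A$ — purely by translation. A second bounded-exponent nonabelian group $H$ sits as an auxiliary ``register'' which is not shifted at all: the generators $h_b$ multiply the $H$-coordinate on the left by $h$ or by $e$ depending on a single cell of the $X_A$-coordinate. The crucial point is that the net motion on the $X_A$-factor is a genuine group homomorphism $\gamma: K\to G$ (Lemma~\ref{lem:SplitEpi}); it does not depend on the configuration. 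Given $|w|\le n$, one first raises $w$ to the power $T_G(n)$ to kill $\gamma(w)$; what remains fixes $X_A$ pointwise and only accumulates a single element of $H$ of word length $\le |w|$, whose order is bounded by the exponent of $H$. That two-step argument (Lemma~\ref{lem:Torsion}) is what yields $T_K=O(T_G T_H)$.

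Your proposal drops this split: you take $G$ to be a subgroup of the topological full group of $H\curvearrowright X$ directly, with generators $x\mapsto h(x)\cdot x$. There the ``net translation'' after $n$ steps, $h_n(x)=h(g^{n-1}x)\cdots h(x)$, genuinely depends on $x$, and there is no homomorphism onto the base group to exploit. Your suggested fix — keep all the $h(x)$ inside a bounded-exponent subgroup $H_0\le H$ — does not close the gap: the accumulated product $h_n(x)$ certainly lies in $H_0$ and thus has bounded order \emph{as a group element}, but that is irrelevant. What you need is $h_n(x)=e$ for some uniformly bounded $n$ and \emph{every} $x$ (assuming a free action), and a bounded-exponent ambient group gives you no control over when, or whether, the configuration-dependent product returns to the identity. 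So bounded torsion of the resulting group is exactly the claim that is left unproven, and it is false for general choices in your framework.

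Two smaller divergences worth noting. First, your subshift $X$ ``encoding partial computations of the enumerator'' is heavier machinery than needed; the paper hides all the computational content in which distances are forbidden in the sunny-side-up-like $X_A$, and the effectiveness of $X_A$ is immediate. Second, the reverse reduction is achieved in the paper by an explicit commutator trick ($w_n = [h'_1,\; h_1^{g_n}]$ with $|g_n|=n$): the conjugate moves the window of the $H$-multiplication to distance $n$, and the commutator is trivial in $K$ iff the distance-$n$ two-point pattern is forbidden, i.e.\ iff $n\in A$ (Lemma~\ref{lem:ManyOne}). Your ``trajectory that closes up'' picture is aiming at the same thing but would need to be replaced by something concrete along these lines. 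In short: the skeleton of your plan is right, but you need the product structure $X_A\times H$ and the split epimorphism $\gamma$ to $G$ to make the bounded-torsion claim go through, and without them the central assertion is unsupported.
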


The exact types of reduction are that $A$ many-one reduces to the word problem, and the word problem conjunctively reduces to $A$.

Our recursion-theoretic contributions are of a somewhat technical nature (though not particularly difficult). We define a possibly new notion called $\phi$-impredictability (Definition~\ref{def:Impredictable}), which roughly states that a Turing machine can enumerate numbers such that every Turing machine occasionally incorrectly guesses (by either halting or not) whether they are in the set, even given an oracle for a prefix of the set. 

\begin{theorem}
\label{thm:ExistsApredictableIntro}
For every total recursive function $\phi$, there exists a $\phi$-impredictable $\Sigma^0_1$ set.
\end{theorem}

We now state our main ``navigation-theoretic'' contribution. In \cite{SaTo17}, for a f.g.\ (finitely-generated) group $G$ and $n \in N$, the class of $G$-subshifts defined by group-walking finite-state machines with $n$ heads is denoted by $\mathcal{S}(G, n)$. See \cite{SaTo17} or Section~\ref{sec:SubshiftCorollary} for the definitions. The following is a slight adaptation of a result of \cite{SaTo17}.

\begin{lemma}
\label{lem:explist}
If $G$ has bounded torsion and has $\phi$-impredictable word problem for fast-enough growing $\phi$, then $\mathcal{S}(G \times \Z, 3) \subsetneq \mathcal{S}(G \times \Z, 4)$. 
\end{lemma}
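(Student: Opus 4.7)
The inclusion $\mathcal{S}(G \times \Z, 3) \subseteq \mathcal{S}(G \times \Z, 4)$ is immediate, so the substance is strictness. My approach is to adapt the scheme of \cite{SaTo17}: construct a witness subshift $X \in \mathcal{S}(G\times\Z,4)$ whose forbidden patterns encode ``wrong predictions'' about the word problem of $G$, and then argue that any three-head recogniser for $X$ would, after unpacking, yield a Turing machine refuting the $\phi$-impredictability of $\WP(G)$.

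For the positive side, I would let the alphabet carry, at each $(g,n) \in G\times\Z$, a local description which at certain marked cells encodes a finite word $w$ in the generators of $G$ together with a prediction bit $b \in \{0,1\}$ for whether $w =_G e$. The four-head automaton proceeds by parking one head at the marker, using a second head to run along the $\Z$-axis to read $w$ letter by letter, and the remaining two heads as a relay carrying the running product of letters through $G$ until they rendezvous with the parked head; the pattern is declared forbidden precisely when the meeting point disagrees with $b$. The bounded-torsion hypothesis on $G$ enters here to guarantee that each generator can be ``undone'' in a uniformly bounded number of moves, which is the geometric fact that makes the relay fit into a finite-state head.

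For the negative side, I would suppose for contradiction that some three-head machine $M$ accepts $X$. The core combinatorial observation, which is essentially the head-counting argument of \cite{SaTo17}, is that three heads on $G \times \Z$ cannot simultaneously transport a word of unbounded length from far along the $\Z$-axis and verify it against a fixed basepoint in the $G$-factor without abandoning one of the two tasks. From $M$ one then extracts a Turing machine $T$ which, on inputs enumerated as in the definition of $\phi$-impredictability, is forced to commit to a guess about membership in $\WP(G)$; the witness of $\phi$-impredictability produces a specific instance on which $M$ must be wrong, contradicting $X \in \mathcal{S}(G\times\Z,3)$. The main obstacle is the calibration in this last step: one must show that enumerating three-head trajectories on $G \times \Z$ incurs only as much recursive overhead as $\phi$'s growth can absorb, and it is again the bounded-torsion assumption that keeps this overhead uniform, by bounding how often the heads can wrap or loop without making group-theoretic progress.
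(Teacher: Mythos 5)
The high-level scheme you describe (build a witness subshift, then show any three-head recogniser would yield a $\chi$ refuting $\phi$-impredictability of $\WP(G)$) is indeed the strategy of the paper, but your witness subshift is the wrong object and the mechanism linking it to impredictability is missing. The paper's separating subshift $Y_B$ is a \emph{binary} subshift over $G \times \Z$: for each $p$ take the configuration $x^p$ with $x^p_{(g,n)} = 1 \iff p \mid n$, and let $Y_B$ be generated by those $x^p$ with $p \in B := \{p : \psi(p) \sim e_G\}$, where $\psi$ is the function witnessing impredictability. Nothing is ``encoded in the alphabet''; the integer $p$ is carried by the \emph{period} of the marker lattice, and whether $x^p \in Y_B$ is exactly the predicate $\psi(p) \sim e_G$. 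This parametrisation is what makes the contradiction work: given a candidate three-head automaton $\mathcal{A}$, one builds $\chi$ that on input $p$ (with oracle $\WP(G) \upharpoonright \phi(p)$) simulates $\mathcal{A}$ on $x^p$ and halts iff $\mathcal{A}$ would forbid it, so $\chi(p, \cdot)\!\downarrow \iff x^p \notin Y_B \iff \psi(p) \not\sim e_G$; impredictability gives infinitely many $p$ where $\psi(p) \sim e_G \iff \chi(p,\cdot)\!\downarrow$, a contradiction. Your scheme of writing $(w,b)$ pairs into the configuration and comparing a relay's rendezvous point to $b$ does not supply a map $p \mapsto$ configuration, so there is no clear way to manufacture the $\chi$ that impredictability is quantified against, nor is it clear why such a subshift would escape three heads (the task is per-$w$ local and looks two- or three-head recognisable).

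The role of bounded torsion is also misidentified. It is not about ``undoing'' a generator in boundedly many moves (that is vacuous: $g^{-1}$ always exists, and in any case $G$ being torsion already gives $g^{-1} = g^{\mathrm{ord}(g)-1}$). What is actually used, and is the technical core inherited from \cite{SaTo17}, is that on a torsion group a head following a periodic local rule loops, so on $x^p$ the $G$-projections of any three heads stay within a distance controlled by $T_G$ and $p$ (this is the $\zeta \circ T \circ \zeta \circ T \circ \zeta$ bound of Lemma~\ref{lem:Four}). That confinement is precisely what lets the simulating $\chi$ get by with an oracle for only the initial segment $\WP(G) \upharpoonright \phi(p)$, and bounded torsion makes $T_G$ constant so the composite bound collapses to $\zeta^5(p)$, i.e.\ a fixed exponential tower. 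Your ``calibration'' paragraph correctly flags that this overhead estimate is the delicate step, but without the confinement lemma the estimate cannot be carried out, and without the periodic configurations $x^p$ there is nothing to calibrate against.
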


For example $\phi = \exp \circ \exp \circ \exp \circ \exp \circ \exp$ is fast enough. By putting the above results together, we obtain that four heads are better than three, as claimed in the title.

\begin{theorem}
\label{thm:FourBetterThanThree}
There exists a finitely-generated recursively-presented group $G$ containing a copy of the integers, such that $\mathcal{S}(G, 3) \subsetneq \mathcal{S}(G, 4)$.
\end{theorem}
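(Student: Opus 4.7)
The plan is to combine the three preceding results, taking $G = H \times \Z$ where $H$ is the bounded-torsion group built by Theorem~\ref{thm:IntroConstruction} on top of an impredictable $\Sigma^0_1$ set supplied by Theorem~\ref{thm:ExistsApredictableIntro}, and then feeding $H$ into Lemma~\ref{lem:explist}.

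First I would fix a sufficiently fast-growing recursive function $\phi \in T$; concretely I would take $\phi$ large enough that the resulting ``fast-enough growing'' hypothesis of Lemma~\ref{lem:explist} holds with ample slack to absorb a recursive distortion (e.g.\ a tower of enough exponentials, generously more than the quintuple exponential suggested after the lemma). By Theorem~\ref{thm:ExistsApredictableIntro} there is a $\phi$-impredictable $\Sigma^0_1$ set $A$. Applying Theorem~\ref{thm:IntroConstruction} to $A$ then yields a finitely-generated recursively-presented torsion group $H$ of bounded torsion whose word problem $\WP(H)$ is conjunctively equivalent to $A$.

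The next step is the transfer of impredictability from $A$ to $\WP(H)$. Since $A$ many-one reduces to $\WP(H)$ and $\WP(H)$ conjunctively reduces to $A$, both reductions are given by total recursive functions whose growth I can bound explicitly from the constructions. Because the chosen $\phi$ grows so fast, composing the ``impredictability witness'' produced for $A$ with these reductions shows that $\WP(H)$ is $\psi$-impredictable for some $\psi \in T$ still growing fast enough to satisfy the hypothesis of Lemma~\ref{lem:explist}; this is the place where it matters that I padded $\phi$ initially. Verifying this transfer carefully from the definition of impredictability (Definition~\ref{def:Impredictable}) is the main technical step, since impredictability is not a priori invariant under Turing equivalence and one must check that a conjunctive reduction lets one simulate an oracle for ``shorter identities'' of $\WP(H)$ by queries to $A$ of only modestly larger length.

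With $\WP(H)$ shown to be $\psi$-impredictable and $H$ of bounded torsion, Lemma~\ref{lem:explist} applies directly and gives $\mathcal{S}(H \times \Z, 3) \subsetneq \mathcal{S}(H \times \Z, 4)$. I then set $G := H \times \Z$: this group is finitely generated and recursively presented (as a direct product of two such groups), and it contains a copy of $\Z$ via the second factor, so $G$ satisfies all the conditions of Theorem~\ref{thm:FourBetterThanThree}. The main obstacle, as indicated, is calibrating $\phi$ and checking the impredictability transfer through the reductions of Theorem~\ref{thm:IntroConstruction}; the rest is bookkeeping.
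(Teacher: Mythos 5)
Your proposal takes essentially the same route as the paper's proof: pick a fast-growing $\phi$, produce a $\phi$-impredictable $\Sigma^0_1$ set $A$ via Theorem~\ref{thm:ExistsApredictableIntro}, build the bounded-torsion group $H$ from Theorem~\ref{thm:IntroConstruction}, transfer impredictability along the two reductions, and finish with Lemma~\ref{lem:explist} applied to $H \times \Z$. The paper packages the intermediate step — existence of a recursively presented bounded-torsion group with $\phi$-impredictable word problem — as a separate statement (Theorem~\ref{thm:ImpredictableGroups}), and encapsulates the transfer itself as Lemma~\ref{lem:Interreduction}; you re-derive this inline, which is fine.

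One small correction worth flagging, since you single it out as the ``main technical step'': you worry that simulating the oracle for $\WP(H)$ ``by queries to $A$ of only modestly larger length'' could degrade the impredictability parameter, and you pad $\phi$ to absorb this. In fact the direction is the opposite. The conjunctive reduction from $\WP(H)$ to $A$ has \emph{exponential} rate $\beta$ (Lemma~\ref{lem:RPandExpSpeed}): from the first $n$ bits of $A$ one computes the first $\beta(n)$ (exponentially many) bits of $\WP(H)$, because fixing the contents of a ball of radius $n$ in $X_A$ settles all group words of length at most $n$, of which there are exponentially many. So in the transfer one simulates the oracle $\WP(H)\upharpoonright\phi'(p)$ from the \emph{shorter} oracle $A\upharpoonright\phi(p)$ with $\phi' = \beta\circ\phi \geq \phi$, and the conclusion (Lemma~\ref{lem:Interreduction} plus Lemma~\ref{lem:Faster}) is that $\WP(H)$ is again $\phi$-impredictable with no loss. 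Your conservative padding therefore does no harm, but it is not needed beyond the quintuple-exponential that Lemma~\ref{lem:explist} already demands.
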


See Section~\ref{sec:Context} for some context for this result, and a new conjecture.

\subsection{Some relevant existing work}

The main group-construction result (recursively presented groups with bounded torsion and with a word problem of a prescribed difficulty) uses the idea from \cite{SaSc16a,BaKaSa16} of groups of finite-state machines. It also uses existing torsion groups as a black-box, in particular it is an application of the deep theory that arose from the Burnside problem \cite{Ad11}.

The idea of hiding information into the word problem is of course not a new idea in combinatorial group theory, but we are not aware of it appearing previously in the context of torsion groups. The Dehn monsters from \cite{MyOs11}, recursively presented groups where no infinite set of distinct elements can be enumerated, seem strongly related. (Our construction cannot be used to produce such groups due to using another group as a black box, but it is possible that their construction can be adapted to produce our result.)

Topological full groups (already on $\Z$) are a well-known source of interesting examples of groups \cite{Ma06,GrMe14,JuMo12}, and our groups can also be interpreted as subgroups of topological full groups of subshifts on torsion groups. In a symbolic dynamics context, \cite{GuJeKaVa18} (independently) uses a similar construction to prove that the automorphism groups of multidimensional SFTs can have undecidable word problem.

Dan Turetsky showed in the MathOverflow answer \cite{MOTu20} that the halting problem is not $\phi$-impredictable for some total recursive $\phi$, and that there exists a $\Sigma^0_1$ set which is simultaneously $\phi$-impredictable for all total recursive $\phi$. The latter result of course implies Theorem~\ref{thm:ExistsApredictableIntro}.

\subsection{Head hierarchies}
\label{sec:Context}

For context, we state what is now known about the head hierarchies, and state a bold conjecture. See Definition~\ref{def:Automata} for the definitions.

\begin{theorem}
\label{thm:WhatIsKnown}
For an infinite finitely-generated group $G$, denote by $h(G) \in \N_{> 0} \cup \{\infty\}$ the supremum of $n$ such that $\mathcal{S}(G, n-1) \subsetneq \mathcal{S}(G, n)$. Then
\[ \{3,4,\infty\} \subset \mathcal{H} = \{h(G) \;|\; G \mbox{ an infinite f.g.\ group}\} \subset \{2,3,4,\infty\}. \]
\end{theorem}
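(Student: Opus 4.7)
This is a summary theorem, and I would assemble its proof by combining Theorem~\ref{thm:FourBetterThanThree} of this paper with prior results from \cite{SaTo17}. The upper bound $\mathcal{H} \subset \{2, 3, 4, \infty\}$ I would take directly from \cite{SaTo17}, which establishes both the easy $h(G) \geq 2$ (via a standard distance-measuring subshift showing $\mathcal{S}(G, 1) \subsetneq \mathcal{S}(G, 2)$ for every infinite f.g.\ $G$) and a gap lemma: if $\mathcal{S}(G, 4) \subsetneq \mathcal{S}(G, 5)$ then $h(G) = \infty$, so no finite value above $4$ is possible. The containments $3, \infty \in \mathcal{H}$ are likewise witnessed by explicit groups from \cite{SaTo17}.

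For the new containment $4 \in \mathcal{H}$, take the group $G$ produced by Theorem~\ref{thm:FourBetterThanThree}; it satisfies $\mathcal{S}(G, 3) \subsetneq \mathcal{S}(G, 4)$, so $h(G) \geq 4$, and by the gap lemma $h(G) \in \{4, \infty\}$. To conclude $h(G) = 4$ rather than $\infty$, I would invoke the precise form of the dichotomy in \cite{SaTo17}: the ``$\infty$'' branch is controlled by a geometric/algebraic property of $G$ that our construction---a bounded-torsion group crossed with a single $\Z$-factor obtained via Theorem~\ref{thm:IntroConstruction}---is arranged not to exhibit.

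The main obstacle is exactly this final verification. If the criterion from \cite{SaTo17} is not directly applicable to the constructed $G$, my backup plan is to prove a matching simulation upper bound: on groups of the form ``bounded-torsion times $\Z$'', four heads should suffice to simulate $n$ heads for every $n \geq 4$, by assigning one head to traverse the $\Z$-factor while the remaining three handle finite-state computations on the torsion part. Together with Theorem~\ref{thm:FourBetterThanThree} this pins down $h(G) = 4$ exactly.
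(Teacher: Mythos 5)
Your overall assembly is the same as the paper's: Theorem~\ref{thm:FourBetterThanThree} supplies the new lower bound, \cite{SaTo17} and \cite{SaTo15} supply everything else. But you flag the step ``$h(G)=4$ rather than $\infty$'' as the main obstacle, and this is precisely where your proposal leaves a genuine gap that the paper closes cleanly. The resolution you are missing is that the group $G$ produced by Theorem~\ref{thm:FourBetterThanThree} is explicitly arranged to contain a copy of $\Z$, and \cite{SaTo17} already proves the upper bound $h(G) \leq 4$ for \emph{every} f.g.\ group containing $\Z$ (this is the ``upper bounds are known'' remark following Conjecture~\ref{con:Char}; the $h(G)=\infty$ branch of the known results is confined to torsion groups). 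So there is no dichotomy to disentangle and no new ``four heads simulate $n$'' lemma to prove; $h(G) \geq 4$ from Theorem~\ref{thm:FourBetterThanThree} together with $h(G) \leq 4$ from \cite{SaTo17} pins down $h(G)=4$ immediately. Your backup plan would, if carried out, reprove a special case of a known result.

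A second, smaller issue: your treatment of $h(G) \geq 2$ for all infinite f.g.\ $G$ via a single ``standard distance-measuring subshift'' is too casual. The paper's proof splits this into two cases with different witnesses: for virtually $\Z$ groups one uses the separation $\AU(\Z,1) \subsetneq \AU(\Z,2)$ from \cite{SaTo15} (generalized), while for groups that are not virtually $\Z$ one shows the sunny-side-up $X_{\leq 1}$ lies in $\AU(G,4) \setminus \AU(G,1)$. A uniform one-subshift argument across all infinite f.g.\ groups is not claimed in the paper, and you should not assume it works without checking both regimes.
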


Our contribution is the $4$ on the left, which is notable because the maximal finite height is now known.

\begin{proof}[Proof sketch]
The fact $4 \in \mathcal{H}$ follows from Theorem~\ref{thm:FourBetterThanThree}. The fact $h \in \mathcal{H} \cap \N \implies h \leq 4$ is proved in \cite{SaTo17}, and the facts $3 \in \mathcal{H}$ and $\infty \in \mathcal{H}$ are proved in \cite{SaTo17} and \cite{SaTo15} respectively. The fact $1 \notin \mathcal{H}$ is seen as follows: In  \cite{SaTo15} it is shown that $\AU(\Z, 1) \subsetneq \AU(\Z, 2)$ and the proof easily generalizes to virtually $\Z$ groups. On the other hand it is easy to see that the sunny-side-up $X_{\leq 1}$ separates one from four heads, i.e.\ $X_{\leq 1} \in \AU(G, 4) \setminus \AU(G, 1)$, for all f.g.\ infinite groups $G$ which are not virtually $\Z$, which can be proved using methods of \cite{SaTo17} and \cite{SaTo15}.
\end{proof}

We conjecture that four heads are needed if and only if the word problem is undecidable:

\begin{conjecture}
\label{con:Char}
Let $G$ be a finitely-generated infinite group which is not torsion. Then
\begin{itemize}
\item if $\Z \leq G$ and $G$ has decidable word problem, then $h(G) = 3$,
\item if $\Z \leq G$ and $G$ has undecidable word problem, then $h(G) = 4$.
\end{itemize}
\end{conjecture}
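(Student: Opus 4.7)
The conjecture splits into three cases requiring different tools, and I would tackle them as a unit: a new simulation theorem for the first bullet, a strengthening of the impredictability machinery for the second, and a torsion-group lift of the known construction of $\infty \in \mathcal{H}$ for the third.

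For $G \geq \Z$ with decidable word problem, the lower bound $h(G) \geq 3$ should transport the witness separating $\mathcal{S}(\Z,2)$ from $\mathcal{S}(\Z,3)$ through the embedded copy of $\Z$ using methods of \cite{SaTo17}. The real work is the upper bound $h(G) \leq 3$, a simulation of every four-head automaton by a three-head automaton. My reading of Lemma~\ref{lem:explist} is that the fourth head is used precisely to certify $G$-word identities that cannot be checked on the fly, and when the word problem is decidable such certifications can be absorbed into the finite-state control. Formalizing ``the fourth head acts as a word-problem oracle'' into a clean simulation is, I expect, the hardest single step in the whole conjecture, because no prior result in this paper points to such a collapse.

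For $G \geq \Z$ with undecidable word problem, Theorem~\ref{thm:FourBetterThanThree} combined with the upper bound $h(G) \leq 4$ from \cite{SaTo17} would close the case once we produce, in each such $G$, a word-problem-style $\phi$-impredictable set for fast-enough $\phi$ to feed Lemma~\ref{lem:explist}. The route is to strengthen Theorem~\ref{thm:ExistsApredictableIntro} (and the sharper version attributed to Turetsky in \cite{MOTu20}) to a uniform or parameterized version: given any undecidable $\Sigma^0_1$ set $A$, build a $\phi$-impredictable $\Sigma^0_1$ set reducible to $A$, then apply Theorem~\ref{thm:IntroConstruction} to embed it as the word problem of a bounded-torsion group whose product with $\Z$ is interpretable inside $G$. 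The main obstacle is showing that every undecidable word problem, not just some constructed one, carries enough computational slack for the diagonalization behind impredictability.

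For torsion groups, the goal is $h(G) = \infty$ for every infinite finitely-generated torsion $G$, by lifting the $\infty \in \mathcal{H}$ construction of \cite{SaTo15} from its specific witness group to arbitrary torsion groups. The separating subshifts there are obstructions to counter-simulation growing with the head count; in the torsion setting no $\Z$-embedding is available, but the finite-state-machine technology used in Theorem~\ref{thm:IntroConstruction} provides bounded-torsion ``counters'' of any prescribed depth. The plan is to argue that for each $n$ one can encode an obstruction which $(n-1)$-head automata on $G$ cannot resolve because their effective exploration radius stays strictly below that of an $n$-head automaton. I view this bullet as the second-hardest, since it asks for a uniform statement about all torsion groups rather than a single example.
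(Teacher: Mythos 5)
This statement is a \emph{conjecture}; the paper explicitly leaves it open, so there is no ``paper's own proof'' to compare against. Your write-up should be read as a research plan, not a proof, and a few of its load-bearing assumptions conflict with what the paper itself says.

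First, you treat the third bullet (torsion $\Rightarrow h(G) = \infty$) as something to be established by lifting the construction of \cite{SaTo15}, but the paper states that $h(G) = \infty$ \emph{is} already known for all infinite f.g.\ torsion groups, citing \cite{SaTo17}. That bullet is not open; it is included in the conjecture only to give the trichotomy a clean form. Similarly, the paper says the upper bounds are known, so for the first bullet the simulation of a fourth head by three on decidable-WP groups is not the missing step you identify; what is open is showing the \emph{lower} bound $h(G) \geq 3$ for every such $G$ (the paper even points out that the natural witness $X_{\leq 1}$ fails on $\Z$ and $\Z^2$, where two heads suffice).

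Second, and more seriously, your route for the second bullet cannot work as stated. You propose to ``build a $\phi$-impredictable $\Sigma^0_1$ set reducible to $A$'' for every undecidable $\Sigma^0_1$ set $A$ and feed it into Lemma~\ref{lem:explist}. But the set that enters Lemma~\ref{lem:explist} is not freely choosable: it must be the word problem of the group $G$ that you are given, since the class $\mathcal{S}(G \times \Z, k)$ is intrinsic to $G$. Moreover, the paper records (via Turetsky, \cite{MOTu20}) that the halting problem itself is \emph{not} $\phi$-impredictable for some $\phi \in T$, so impredictability is genuinely not a property shared by all undecidable $\Sigma^0_1$ sets. A group whose word problem is many-one equivalent to the halting problem therefore cannot be handled by the impredictability machinery with the $\phi$ required by Lemma~\ref{lem:explist}. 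Settling the second bullet would require either a fundamentally different separation argument for such groups, or a weaker hypothesis in Lemma~\ref{lem:explist} than $\phi$-impredictability for towering $\phi$ --- not merely a ``uniform'' version of Theorem~\ref{thm:ExistsApredictableIntro}.
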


The upper bounds are known, and $h(G) = \infty$ is known for torsion groups \cite{SaTo17}. The first item is the join of conjectures in \cite{SaTo17} and \cite{SaTo15}. It is known that the sunny-side-up subshift $X_{\leq 1}$ does not prove $h(\Z) = h(\Z^2) = 3$, as on these groups two heads suffice for it. We do not know any other non-torsion groups where the sunny-side-up requires fewer heads than the above conjecture would suggest.

Settling this conjecture in the positive would not be the end of the story. A more refined invariant than $h(G)$ would be to ask what the precise set of $n$ such that $\mathcal{S}(G \times \Z, n-1) \subsetneq \mathcal{S}(G \times \Z, n)$ is. In particular this is of interest when $G$ is a torsion group; \cite{SaTo17} only gives an affine function $f(n) \sim 3n$ such that $\mathcal{S}(G \times \Z, n) \subsetneq \mathcal{S}(G \times \Z, f(n))$.

These results are of course about just one way to associate subshift classes to $n$-headed automata. We believe the results are relatively robust to changes in the definition, but some details are critical. It is in particular open what happens when $G$ is a torsion group and the heads are allowed to communicate over distances, i.e.\ if they have a shared state.

\section{Preliminaries}

For two functions $f, g$ write $f = O(g)$ (resp. $f = \Omega(g)$) if for some choice of $a > 0$, $f(n) \leq ag(n)$ (resp. $f(n) \geq ag(n)$) for large enough $n$. Write $f = \Theta(g)$ if $f = O(g)$ and $f = \Omega(g)$. Write $f \sim g$ for $f(n)/g(n) \rightarrow 1$. For $a,b \in \N$ write $a \;|\; b$ for ``$a$ divides $b$''.

We assume some familiarity with computability/recursion theory, but we state some (not necessarily standard) conventions. We identify partial computable functions with Turing machines, and also their G\"odel numbers. Let $\TT \subset \mathbb{N}$ be the set of codings of Turing machines which halt on every input, and write $\PP = \mathbb{N}$ for all partial recursive functions. ``Recursive'' means the same as ``computable'' and refers to the existence of a Turing machine, which always halts unless the function is explicitly stated to be partial. If $\chi$ is a Turing machine, we write $\chi(p){\downarrow}$ if $\chi$ halts on input $p$. A partial (not necessarily computable) function from $A$ to $B$ is denoted $g : A \nrightarrow B$.

Let us recall some basic definitions of reductions. A set $A$ \emph{many-one reduces} to a set $B$ if there is $\phi \in \TT$ such that $n \in A \iff \phi(n) \in B$. For the following three reductions, we give quantitative versions, so we have a handle on the rate at which reduction happens. The definitions are stated in terms of characteristic sequences, but correspond to their usual meaning.

We say $B$ \emph{weakly truth table reduces}, or \emph{wtt-reduces}, to $A$ if there exists $g \in \PP$ and nondecreasing $\beta \in \TT$ with $\beta(n) \rightarrow \infty$ such that when applied to words as $g : \{0,1\}^* \nrightarrow \{0,1\}^*$, if $x \in \{0,1\}^\N$ is the characteristic function of $A$ and $y$ that of $B$, we have that $g(w)$ is defined on prefixes of $y$, and
\[ w \prec y \implies (g(w) \prec x \wedge |g(w)| \geq \beta(|w|)). \]
We call $\beta$ the \emph{rate} of the reduction.

% wtt reduction from B to A implies mine? \beta is that given n you start computing the positions needed, and when you need more than n you stop; but upper bound by smth to get a value. now g can just compute the positions needed using the reduction
% mine implies wtt reduction? to check if n is in B, compute values of beta until you reach n, and once you reach, that's your truth table thingie, then compute g.

We say $B$ \emph{positively reduces} to $A$ if it wtt-reduces to $A$, for some $\beta \in \TT$, with the following additional properties for $g$: $g \in \TT$, $|g(w)| = \beta(|w|)$ for all $w$, and $g$ is \emph{monotone} in the sense that $u \leq v \implies g(u) \leq g(v)$, where $\leq$ is letterwise comparison. If further $g(u)_i$ only depends on whether $u \geq w_{i,|u|}$ for some $w_{i,|u|}$ computable from $i, |u|$, then $B$ \emph{conjunctively reduces} to $A$.

% positive reduces usually means for n you compute a list of sets and if you contain one of them then you put n
% mine implies that? you are given n. use \beta to get the positions for determining at least [0,n]. Compute all values of g and project to coordinate n. still it's monotone obv. list the supports of minimal words putting 1. if none of those is overed, then actual content is less so... if one is overed then because g is increasing n stays in set
% that implies mine? to get \beta, again go over n and get sets and put \beta(k) max for which is enough. now to get g, just take the function that checks you are over those words, ok.

We say $B$ \emph{Turing reduces} to $A$ if there is an $A$-oracle machine that can determine membership in $B$. Clearly many-one reducibility implies conjunctive reducibility implies positive reducibility implies weak truth table reducibility implies Turing reducibility.

We assume some familiarity with group theory, but state some conventions. Our groups are discrete and mostly finitely-generated. Finitely-generated groups come with a finite generating set, which we usually do not mention. The identity of a group $G$ is denoted by $e_G$ (or just $e$). For words $u, v \in S^*$ where $S$ is the generating set, write $u \approx_G v$ (or just $u \approx v$ if $G$ is clear from context) when $u$ and $v$ represent (evaluate to) the same element of $G$. For $g, h \in G$, write $[g, h] = g^{-1}h^{-1}gh$ for the \emph{commutator} of $g$ and $h$.

The \emph{word problem} of a group $G$ is the following subset $W \subset \N$: Let $S$ be the fixed symmetric generating set, and order elements of $S^*$ (finite words over $S$) first by length and then lexicographically. Include $n \in W$ if the $n$th word evaluates to the identity of $G$. Slight inconvenience is caused by linearizing the word problem this way, but on the other hand sticking to subsets of $\N$ slightly simplifies the discussion in Section~\ref{sec:Impredictability}. We denote the word problem of $G$ as $\WP(G) \subset \N$.

If a countable group $G$ acts on a compact zero-dimensional space $X$, the corresponding \emph{topological full group} is the smallest group of homeomorphisms $g : X \to X$ which contains every homeomorphism $g$ with the following property: there exists a  clopen partition $(P_i)_{i = 1}^k$ of $X$ and $g_i \in G$ such that for $\forall i \in \{1,..,k\}: \forall x \in P_i: gx = g_ix$. It turns out that the group contains precisely such homeomorphisms, i.e.\ they are closed under composition. One may think of $((P_i)_i, (g_i)_i)$ as a local rule for $g$.

A group is \emph{torsion} if all elements have finite order, i.e.\ $G$ does not contain a copy of the integers. The \emph{torsion function} of a torsion group is $T_G : \N \to \N$ defined by
\[ T_G(n) = \sup \{\mathrm{ord}(g) \;|\; g \in G, |g| \leq n\} \]
where $\mathrm{ord}(g) = |\langle g \rangle|$ and $|g|$ is the word norm with respect to the implicit generating set. A group is of \emph{bounded torsion} if $T_G(n) = O(1)$.

In \cite{Iv94} Ivanov shows that the \emph{free Burnside group}
\[ B(2,2^{48}) = \langle a, b \;|\; \forall w \in \{a,b,a^{-1},b^{-1}\}^*: w^{2^{48}} \approx e \rangle \]
is infinite and has decidable word problem (\cite[Theorem~A]{Iv94}), and we obtain

\begin{lemma}
There exists a finitely-generated torsion group with bounded torsion and decidable word problem.
\end{lemma}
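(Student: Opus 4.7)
The plan is to simply take $G = B(2, 2^{48})$, the free Burnside group just introduced, and check off each of the four required properties in turn. Finite generation is immediate from the presentation, which names the two generators $a$ and $b$. Torsion is also immediate from the defining relations: every word $w \in \{a,b\}^*$ satisfies $w^{2^{48}} \approx e$, so every element of $G$ has order dividing $2^{48}$.

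For bounded torsion, the previous observation already gives $\mathrm{ord}(g) \leq 2^{48}$ for every $g \in G$, so the torsion function satisfies $T_G(n) \leq 2^{48}$ independently of $n$, whence $T_G(n) = O(1)$. Decidability of the word problem is then exactly the content of \cite[Theorem~A]{Iv94} as quoted by Ivanov, applied to this specific group.

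The only real subtlety is that the statement of the lemma does not quantitatively insist on an infinite witness, so in principle a trivial or finite group would suffice; however, the rest of the paper needs a nontrivial witness (in particular, to plug into the construction of Theorem~\ref{thm:IntroConstruction}), and Ivanov's theorem supplies infiniteness of $B(2,2^{48})$ as a side benefit. There is no real obstacle here, since each of the four clauses is either structural (finite generation, the relator forcing torsion with a uniform bound) or a direct citation (decidability).
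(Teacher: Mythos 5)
Your proof is correct and takes essentially the same approach as the paper: both simply observe that Ivanov's $B(2,2^{48})$ is a two-generated group whose defining relations force all orders to divide $2^{48}$ (hence torsion and bounded torsion), and cite \cite{Iv94} for decidability of the word problem. Your side remark about infiniteness not being in the lemma's statement but being needed downstream is also accurate and matches the paper's phrasing, which explicitly records that $B(2,2^{48})$ is infinite.
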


The decision algorithm is given in \cite[Lemma~21.1]{Iv94}. For a survey on the Burnside problem see \cite{Ad11}. Theorem~\ref{thm:FourBetterThanThree} could be proved using any torsion group with recursive torsion function (e.g.\ the Grigorchuk group), with minor modifications, as explained after the proof of Theorem~\ref{thm:FourBetterThanThree}. Our technical construction results (in particular Theorem~\ref{thm:Construction}) are stated and proved without assuming the existence of infinite f.g.\ groups of bounded torsion, though obviously to obtain Theorem~\ref{thm:IntroConstruction} the existence of one is necessary (since it states the existence of one).

We assume some familiarity with symbolic dynamics on groups (see \cite{AuBaSa17} for more information), but state some conventions. If $G$ is a group, and $\Sigma$ a finite set, $\Sigma^G$ with the product topology and the $G$-action $gx_h = x_{g^{-1}h}$ is called the \emph{full shift}, and the actions $(g, x) \mapsto gx$ are called \emph{shifts}. A \emph{subshift} is a topologically closed subset $X$ satisfying $GX = X$. A particularly important subshift is the \emph{sunny-side-up} subshift $X_{\leq 1} = \{x \in \{0,1\}^G \;|\; \sum_{g \in G} x_g \leq 1\}$. Equivalently a subshift is defined by a family of \emph{forbidden patterns}, i.e.\ a (possibly infinite) family of clopen sets that the \emph{orbit} $Gx$ of $x$ may not intersect. If $G$ has decidable word problem then $X \subset \Sigma^G$ is \emph{effective} if there exists a Turing machine that enumerates a family of forbidden patterns which defines the subshift. A \emph{cellular automaton} on a subshift is a continuous shift-commuting self-map of it.

\section{Impredictability}
\label{sec:Impredictability}

\begin{definition}
\label{def:Impredictable}
For a function $\phi$, a set $A \subset \mathbb{N}$ is \emph{$\phi$-impredictable} if
\[ \exists \psi \in \TT: \forall \chi \in \PP: \exists^\infty p: \psi(p) \in A \iff \chi(p, A \upharpoonright \phi(p))\!\downarrow. \]
\end{definition}

To unravel this definition a bit, we want to have a Turing machine $\psi$ which always halts and gives us positions $\psi(p)$ on the number line $\N$ so that, for any Turing machine $\chi$, $\psi(p) \in \N$ is just the halting information about $\chi$ on input $p$, even if $\chi$ is allowed oracle access to the first $\phi(p)$ bits of $A$. Since $\chi$ is quantified universally, this definition is one way to formalize the idea that it is hard to predict whether $\psi(p) \in A$ even given access to the first $\phi(p)$ bits of $A$.

\begin{lemma}
\label{lem:Faster}
Let $\phi', \phi \in \TT$ and $\phi'(n) \geq \phi(n)$ for all large enough $n$. Then every $\phi'$-impredictable set is $\phi$-impredictable, with the same choice of $\psi$.
\end{lemma}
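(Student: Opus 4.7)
The plan is to show the same witness $\psi$ works by reducing an arbitrary predictor against $\phi$ to a predictor against $\phi'$. Concretely, given any $\chi \in P$, I would build an auxiliary partial recursive machine $\chi' \in P$ which, on input $(p, \sigma)$ with $\sigma \in \{0,1\}^*$, first computes $\phi(p)$ (this is legal because $\phi \in T$), checks whether $|\sigma| \geq \phi(p)$, and if so truncates $\sigma$ to its length-$\phi(p)$ prefix $\sigma'$ and runs $\chi(p, \sigma')$; otherwise it diverges. The point is that whenever $\sigma = A \upharpoonright \phi'(p)$ and $\phi'(p) \geq \phi(p)$, the truncation $\sigma'$ equals $A \upharpoonright \phi(p)$, so $\chi'(p, A \upharpoonright \phi'(p))\!\downarrow$ iff $\chi(p, A \upharpoonright \phi(p))\!\downarrow$.

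Next I would invoke $\phi'$-impredictability applied to the machine $\chi'$: there are infinitely many $p$ for which
\[ \psi(p) \in A \iff \chi'(p, A \upharpoonright \phi'(p))\!\downarrow. \]
By the hypothesis, $\phi'(p) \geq \phi(p)$ holds for all sufficiently large $p$, so discarding the finitely many small exceptions still leaves infinitely many $p$ on which the above biconditional holds \emph{and} $\chi'(p, A \upharpoonright \phi'(p))\!\downarrow$ coincides with $\chi(p, A \upharpoonright \phi(p))\!\downarrow$ by construction of $\chi'$. Chaining these two equivalences yields infinitely many $p$ with $\psi(p) \in A \iff \chi(p, A \upharpoonright \phi(p))\!\downarrow$, as required.

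Since $\chi$ was arbitrary, $\psi$ witnesses $\phi$-impredictability of $A$. There is no real obstacle here: the only subtlety is making sure that $\chi'$ is genuinely partial recursive (it is, since $\phi$ is total recursive and truncation and simulation are computable) and that ``for all large enough $n$'' in the hypothesis is compatible with the ``infinitely many $p$'' quantifier in the definition, which it plainly is.
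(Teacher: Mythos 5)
Your proof is correct and takes essentially the same approach as the paper: both construct from $\chi$ a truncation machine and apply $\phi'$-impredictability to it. The only cosmetic difference is that the paper disposes of the ``finitely many exceptions'' up front by noting one may assume $\phi \leq \phi'$ everywhere, whereas you discard them at the end from the $\exists^\infty$ quantifier; both are fine.
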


\begin{proof}
Let $\phi, \phi' \in \TT$ and suppose $A$ is $\phi'$-impredictable. We need to show $\phi$-impredictability where $\phi(n) \leq \phi'(n)$ for all large enough $n$. Clearly changing finitely many initial values does not change $\phi$-impredictability, so we may assume $\phi \leq \phi'$. Let $\psi \in \TT$ be given by $\phi'$-impredictability, so
\[ \forall \chi \in \PP: \exists^\infty p: \psi(p) \in A \iff \chi(p, A \upharpoonright \phi'(p))\!\downarrow \]
In particular, we can restrict to $\hat \chi \in \PP$ which given $(p, w)$ first cuts off all but at most the first $\phi(p) \leq \phi'(p)$ symbols of $w$, and then apply $\chi$. This restriction is
\[ \forall \hat \chi \in \PP: \exists^\infty p: \psi(p) \in A \iff \chi(p, A \upharpoonright \phi(p))\!\downarrow, \]
which is just the definition of $\phi$-impredictability.
\end{proof}

%The following theorem shows that $\phi$-impredictable $\Sigma^0_1$ sets exist. In Theorem~\ref{thm:ApredictableGroups}, where we construct groups with $\phi$-apredictable word problems. However, the abstract result contains all the recursion-theoretic ideas, so we find it useful to include this.

\begin{theorem}
\label{thm:ExistsApredictable}
For every $\phi \in \TT$, there exists a $\phi$-impredictable $\Sigma^0_1$ set.
\end{theorem}

\begin{proof}
We construct an increasing total computable function $\psi$, and $A$ will be contained in its image. We want to have
\[ \psi(p) \in A \iff \chi(p, A \upharpoonright \phi(p))\!\downarrow \]
infinitely many times for all $\chi$. The way we construct $A$ is we go through $n \in \N$ and set either $n \notin A$, or set up the rule $n \in A \iff {\xi_n {\downarrow}}$ for some Turing machine $\xi_n$ whose behavior we describe (informally). We refer to those $n \in \N$ already considered as \emph{determined}.

List all partial functions $\chi$ in an infinite-to-one way, i.e.\ we consider functions $\chi$ successively, so that each $\chi$ appears infinitely many times. When considering $\chi$, we want to determine new values in $A$ so as to make sure that
\[ \psi(p) \in A \iff \chi(p, A \upharpoonright \phi(p))\!\downarrow \]
for at least one new $p \in \N$. Suppose we have already determined the first $m$ values of $A$, i.e.\ the word $w \in \{0,1\}^m$ such that the characteristic sequence of $A$ will begin with the word $w$ is already determined, and that we have not determined whether $n \in A$ for any $n \geq m$. The idea is that while we do not know what the word $w$ actually is, there are only $2^m$ possible choices, and we simply try all of them to get the equivalence above to hold for one new $p$. To achieve this we will determine whether $n \in A$ for some interval of choices $n \in [m, M']$.

Enumerate the words of $\{0,1\}^m$ as $w_0, w_1, ..., w_{2^m-1}$. Now, let $p_0 < p_1 < ... < p_{2^m-1}$ be minimal such that $\phi(p_i) \geq m$ for all $i$. Let $M = \max_j \phi(p_j)$ Set $n \notin A$ for all $n \in [m, M]$. As $\psi(p_i)$ for $i \in [0, 2^m-1]$ pick any distinct values greater than $M$, and determine $\psi(p_i) \in A \iff \chi(p_i, w_i \cdot 0^{\phi(p_i)-m})\!\downarrow$. If $w = w_i$, then for $p = p_i$ we have
\[ \psi(p) \in A \iff \chi(p_i, w_i \cdot 0^{\phi(p_i)-m}){\downarrow} \iff \chi(p, A \upharpoonright \phi(p)){\downarrow}, \]
because the characteristic sequence of $A$ indeed begins with the word $w_i \cdot 0^{\phi(p_i)-m}$. Thus, we have obtained a new value $p = p_i$ at which the statement is satisfied for $\chi$.

For all $n$ that are yet undetermined, but some larger number is determined, we determine $n \notin A$, so that we determine the values in some new interval $[m, M']$. We can then inductively continue to the next value of $\chi$. This process determines all values of $A$, and by construction $A$ is a recursively enumerable set which is $\phi$-impredictable.
\end{proof}

\begin{lemma}
\label{lem:Interreduction}
Let $A, B \subset \N$ and suppose $A$ many-one reduces to $B$, and $B$ wtt-reduces to $A$ with rate $\beta$. If $A$ is $\phi$-impredictable, then $B$ is $(\beta \circ \phi)$-impredictable.
\end{lemma}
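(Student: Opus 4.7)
The plan is to combine the two reductions: the many-one reduction $A \leq_m B$ pushes the witness $\psi$ for $A$ forward to a witness $\psi_B$ for $B$, while the wtt-reduction from $B$ to $A$ lets us pull back any adversary $\chi' \in P$ for $B$ to an adversary $\chi \in P$ for $A$. The universal quantifier over $\chi'$ is then discharged by applying the given $\phi$-impredictability of $A$.

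Concretely, let $f \in T$ witness $A \leq_m B$, so that $n \in A \iff f(n) \in B$, and let $g, \beta$ witness the wtt-reduction from $B$ to $A$; the content I shall use is that $g$ lets one computably determine $B \upharpoonright \beta(n)$ from $A \upharpoonright n$. Let $\psi \in T$ be a witness to the $\phi$-impredictability of $A$, and set $\psi_B := f \circ \psi$, which is in $T$. Since $f$ is a many-one reduction, $\psi_B(p) \in B \iff \psi(p) \in A$.

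For arbitrary $\chi' \in P$, define $\chi \in P$ on input $(p, w)$ as follows: first use $g$ to compute from $w$ a length-$\beta(|w|)$ word $w'$ (so that, when $w = A \upharpoonright |w|$, one has $w' = B \upharpoonright \beta(|w|)$); then run $\chi'(p, w')$ and halt iff it halts. Applying the $\phi$-impredictability of $A$ to this $\chi$ yields infinitely many $p$ with $\psi(p) \in A \iff \chi(p, A \upharpoonright \phi(p))\!\downarrow$. When $w = A \upharpoonright \phi(p)$ the intermediate word $w'$ equals $B \upharpoonright \beta(\phi(p))$, so this halting equivalence is the same as $\chi'(p, B \upharpoonright \beta(\phi(p)))\!\downarrow$. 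Combining with $\psi_B(p) \in B \iff \psi(p) \in A$ gives $\psi_B(p) \in B \iff \chi'(p, B \upharpoonright \beta(\phi(p)))\!\downarrow$ for infinitely many $p$, and since $\chi'$ was arbitrary, $\psi_B$ witnesses $(\beta \circ \phi)$-impredictability of $B$.

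The main care needed is in the direction and length bookkeeping of the wtt-reduction: it must be invoked so that an $A$-prefix of length $\phi(p)$ produces exactly the $B$-prefix of length $\beta(\phi(p))$ that $\chi'$ expects. Once this alignment is fixed, the construction is a straightforward translation of adversaries and requires no further quantitative cleverness.
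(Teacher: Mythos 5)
Your proof is correct and follows essentially the same route as the paper: push the witness $\psi$ forward through the many-one reduction $f$ to get $\psi_B = f\circ\psi$, pull each adversary $\chi'$ for $B$ back to an adversary for $A$ by precomposing with the wtt-reduction $g$ (truncated to length $\beta(|w|)$), and then invoke $\phi$-impredictability of $A$. The only cosmetic point is that the wtt-reduction only guarantees $|g(w)|\geq\beta(|w|)$, so you should say explicitly that $w'$ is $g(w)$ truncated to length $\beta(|w|)$, which is exactly what the paper does.
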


\begin{proof}
Let $\phi' = \beta \circ \phi$. Let $f : \N \to \N$ be the many-one reduction from $A$ to $B$, and $g : \{0,1\}^* \to \{0,1\}^*$ the wtt-reduction from $B$ to $A$ with rate $\beta$. Since $A$ is $\phi$-impredictable there exists $\psi : \N \to \N$ such that
\[ \forall \chi \in \PP: \exists^\infty p: \psi(p) \in A \iff \chi(p, A \upharpoonright \phi(p)) {\downarrow}. \]
Setting $\psi' = f \circ \psi$, we have, by the definition of $f$, that
\[ \forall \chi \in \PP: \exists^\infty p: \psi'(p) \in B \iff \chi(p, A \upharpoonright \phi(p)) {\downarrow}. \]

For $\chi \in \PP$ define $\hat \chi \in \PP$ as follows: Given $(p, w)$, compute $g(w) = u$ and then evaluate $\chi(p, u \upharpoonright \phi'(p))$, so that
\[ \hat \chi(p, A \upharpoonright \phi(p)) {\downarrow} \iff \chi(p, g(A \upharpoonright \phi(p)) \upharpoonright \phi'(p)) {\downarrow}, \]
where we observe that $g(A \upharpoonright \phi(p)) \upharpoonright \phi'(p) = B \upharpoonright \phi'(p)$ (in particular $g$ indeed halts with an output so the formula makes sense).

Specializing the first quantifier, we have
\[ \forall \hat \chi \in \PP: \exists^\infty p: \psi'(p) \in B \iff \hat \chi(p, A \upharpoonright \phi(p)) {\downarrow}, \]
equivalently
\[ \forall \chi \in \PP: \exists^\infty p: \psi'(p) \in B \iff \chi(p, B \upharpoonright \phi'(p)), \]
so $\psi'$ proves that $B$ is $\phi'$-impredictable, as desired.
\end{proof}

We show that impredictability implies uncomputability.

\begin{proposition}
If $A$ is $\Pi^0_1$, then $A$ is not $\phi$-impredictable for any $\phi \in \TT$.
\end{proposition}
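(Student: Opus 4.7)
The plan is to negate the definition of $\phi$-impredictability and, for any candidate $\psi \in T$, exhibit a single $\chi \in P$ which falsifies the biconditional for \emph{every} input $p$. Unfolding the quantifiers, $A$ fails to be $\phi$-impredictable iff for every $\psi \in T$ there is $\chi \in P$ such that
\[ \psi(p) \in A \iff \chi(p, A \upharpoonright \phi(p))\!\downarrow \]
holds for only finitely many $p$; so fix an arbitrary $\psi \in T$ (and an arbitrary $\phi \in T$, though the latter will play no role).

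The key observation is that $\Pi^0_1$-ness makes non-membership in $A$ semi-decidable, which lets us ignore the oracle argument entirely. Since $A$ is $\Pi^0_1$, its complement $\N \setminus A$ is recursively enumerable. Let $\chi \in P$ be the partial function which, on input $(p, w)$, discards $w$, starts enumerating $\N \setminus A$, and halts if and only if $\psi(p)$ is eventually listed. Then $\chi(p, A \upharpoonright \phi(p))\!\downarrow$ if and only if $\psi(p) \notin A$, independently of the oracle input and of $\phi$.

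Substituting into the biconditional yields $\psi(p) \in A \iff \psi(p) \notin A$, which is false for every $p \in \N$. Hence \emph{no} $p$ witnesses the biconditional, in particular not infinitely many, contradicting $\phi$-impredictability of $A$ via $\psi$. There is no real obstacle to the argument; the only point worth flagging is that $\phi$ is never used, matching the statement's ``for any $\phi \in T$'', and that one should verify the roles of $\chi$ and its oracle in Definition~\ref{def:Impredictable} permit the oracle-oblivious $\chi$ constructed here (which they do, since $P$ is the set of all partial recursive functions of the appropriate arity).
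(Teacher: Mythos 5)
Your proof is correct and takes essentially the same approach as the paper: both exploit that $\N \setminus A$ is recursively enumerable to build an oracle-oblivious $\chi$ that halts on $(p, w)$ precisely when $\psi(p) \notin A$, making the biconditional fail for every $p$. The only difference is cosmetic — you frame it as directly negating the definition rather than as a proof by contradiction.
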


\begin{proof}
Suppose for a contradiction that $A$ is $\Pi^0_1$ and is $\phi$-impredictable with $\phi \in \TT$. Let $\psi \in \PP$ be such that
\[ \forall \chi \in \PP: \exists^\infty p: \psi(p) \in A \iff \chi(p, A \upharpoonright \phi(p))\!\downarrow. \]
In particular, this applies to the following $\chi$: given $(p, w)$, we ignore $w$ and if $\psi(p) \notin A$, then $\chi(p,w) \!\downarrow$, and otherwise $\chi(p,w)\!\uparrow$. This well-defines $\chi \in P$ since $A$ is $\Pi^0_1$, and for all $p \in \N$ we have
\[ \psi(p) \in A \iff \chi(p, A \upharpoonright \phi(p))\!\uparrow, \]
a contradiction.
\end{proof}

\section{Impredictable torsion groups}

\begin{definition}
We define a group $K(G,A,H)$ which depends on a choice of a finitely-generated groups $G, H$ (and choices of generating sets for them, kept implicit), and a set $A \subset \N$. First define a subshift on $G$ by
\[ X_A = \{x \in \{0,1\}^G \;|\; \sum_{g \in G} x_g \leq 2, \mbox{ and } \sum_{k \in \{g, h\}} x_k = 2 \implies d(g, h) \notin A \} \]
where $d : G \times G \to \N$ is the (left-invariant) word metric. To each $g \in G$ associate the bijection
\[ \hat g(x, h') = (g \cdot x, h'), \;\; \hat g : X_A \times H \to X_A \times H \]
i.e.\ the usual shift action in the first component, and to each $b \in \{0,1\}$ and $h \in H$ associate the bijection
\[ h_b(x, h') = (x, h^{1- |x_{e_G} - b|} \cdot h'), \;\; h_b : X_A \times H \to X_A \times H \]
where $h^{1 - |x_{e_G} - b|}$ evaluates to $e_H$ if $x_{e_G} = b$, and to $h$ otherwise. 
Define
\[ K(G,A,H) = \langle \{\hat g, h_b \;|\; g \in G, h \in H, b \in \{0,1\}\} \rangle \leq \Sym(X_A \times H) \]
\end{definition}

Obviously $K = K(G, A, H)$ is finitely-generated, and the implicit generators we use for $K$ are the ones from the definition, with $\hat g$ taken only for $g$ in the generating set of $G$, and $h_b$ only for $h$ in the generating set of $H$.

\begin{remark}
This group can be interpreted as a group of finite-state machines in the sense of \cite{BaKaSa16} (with obvious nonabelian generalization) when $H$ is finite, by simulating the actions of $\hat g$ by translations of the head, having $|H|$ states, and changing the state by the left-regular action of $H$ (if in the correct clopen set) when $h_b$ is applied. Again if $H$ is finite, $K$ can also be interpreted as a subgroup of the topological full group of the $G \times H$-subshift $X_A \times H$ under the action $(g, h) \cdot (x, h') = (gx, hh')$, by having $\hat g$ act by $(g, e_H)$ and having $h_B$ act by either $(e_G, h)$ or $(e_G, e_H)$ depending on $x_{e_G}$.
\end{remark}

We now prove some important technical properties of these groups, leading up to the proof that they give examples of bounded torsion groups with impredictable word problem.

\begin{lemma}
If $A' \subset A$, the defining action of $K(G,A,H)$ can be seen as a restriction of the action of $K(G,A',H)$ in a natural way, thus $K(G,A,H)$ is a quotient group of $K(G,A',H)$. 
\end{lemma}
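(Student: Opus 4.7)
The plan is to observe that enlarging $A$ shrinks the subshift: if $A' \subset A$, then the configurations in $X_A$ satisfy a strictly stronger constraint (every distance between two $1$s must avoid $A$, hence also avoids $A'$), so $X_A \subset X_{A'}$. Consequently $X_A \times H$ is a subset of $X_{A'} \times H$, and the natural strategy is to show that the action of $K(G,A',H)$ on $X_{A'} \times H$ leaves this subset invariant and that the restriction map realizes $K(G,A,H)$ as a quotient.

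First I would check that the generators of $K(G,A',H)$ preserve $X_A \times H$: the generator $\hat g$ acts as a $G$-shift on the first coordinate, and $X_A$ is shift-invariant by definition, so $\hat g(X_A \times H) \subset X_A \times H$; the generator $h_b$ fixes the first coordinate outright, so it trivially preserves $X_A \times H$. Since inverses of these generators have the same form, the full group $K(G,A',H)$ preserves $X_A \times H$ as a set.

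Restriction then yields a group homomorphism $\rho : K(G,A',H) \to \Sym(X_A \times H)$. Because the defining formulas for $\hat g$ and $h_b$ depend only on the values of $x$ and $h'$ (not on which ambient subshift they belong to), applying $\rho$ to the generator $\hat g \in K(G,A',H)$ produces exactly the generator $\hat g \in K(G,A,H)$, and similarly for $h_b$. Therefore the image of $\rho$ contains all the defining generators of $K(G,A,H)$, hence equals $K(G,A,H)$, and $\rho$ is the desired surjection $K(G,A',H) \twoheadrightarrow K(G,A,H)$.

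There is no real obstacle here — the argument is almost entirely definition-chasing. The only point requiring any attention is the direction of the containment $X_A \subset X_{A'}$ (which is the \emph{opposite} of $A' \subset A$), and then the verification that the generators preserve the subset $X_A \times H$; both are immediate from the definitions.
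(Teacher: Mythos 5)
Your argument is correct and is essentially the paper's own proof, just spelled out in more detail: the paper likewise observes that $X_A \subset X_{A'}$, that the generating homeomorphisms restrict, and that restriction sends generators to generators, so identities of $K(G,A',H)$ hold in $K(G,A,H)$. No gap.
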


\begin{proof}
Map generators to generators in the obvious way. The group $K(G, A', H)$ acts on $X_{A'} \times H \supset X_A \times H$ and the restriction of the action to $X_A \times H$ is precisely that of $K(G,A,H)$. Thus, all identities of $K(G,A',H)$ are identities also in $K(G,A,H)$.
\end{proof}

In particular, the defining action of $K(G,A,H)$ is always a restriction of $K(G,\emptyset,H)$.

\begin{lemma}
\label{lem:SplitEpi}
For any $G, A, H$, there exists a split epimorphism $K(G,A,H) \rightarrow G$.
\end{lemma}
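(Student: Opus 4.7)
The plan is to exhibit a retraction $\pi : K(G,A,H) \to G$ together with a section $s : G \to K(G,A,H)$ satisfying $\pi \circ s = \id_G$; this is equivalent to $\pi$ being a split epimorphism.

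For the section, I would take $s(g) = \hat g$. Since $\hat g \hat h(x, h') = \hat g(h \cdot x, h') = (gh \cdot x, h') = \widehat{gh}(x,h')$, the assignment $g \mapsto \hat g$ is already a homomorphism from $G$ into $\Sym(X_A \times H)$, and its image lies in $K(G,A,H)$ by construction, so this step is essentially a one-line computation.

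For the retraction, my approach is to use the projection $\pi_1 : X_A \times H \to X_A$ onto the first coordinate. Each generator of $K(G,A,H)$ intertwines, via $\pi_1$, with a permutation of $X_A$: the generator $\hat g$ intertwines with the shift by $g$, while $h_b$ and its inverse $(h^{-1})_b$ preserve the fibres of $\pi_1$ and hence intertwine with $\id_{X_A}$. The set of $k \in K(G,A,H)$ possessing this intertwining property is closed under composition, so it exhausts $K(G,A,H)$, and we obtain a homomorphism $K(G,A,H) \to \Sym(X_A)$ sending $\hat g \mapsto (x \mapsto g \cdot x)$ and $h_b \mapsto \id_{X_A}$, whose image is the group of shifts. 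I would then argue that the shift action of $G$ on $X_A$ is faithful: take the configuration $x \in X_A$ with $x_{e_G} = 1$ and $x_h = 0$ otherwise, which lies in $X_A$ for every $A$ since the distance condition is vacuous. As $(g \cdot x)_g = x_{e_G} = 1$, the unique $1$ of $g \cdot x$ sits at position $g$, so $g \cdot x = x$ forces $g = e_G$. Identifying the image of the homomorphism with $G$ via this faithful action produces $\pi$, and surjectivity is immediate from $\pi(\hat g) = g$.

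The final step is to check $\pi \circ s = \id_G$, which is immediate from $\pi(\hat g) = g$. I do not anticipate any real obstacle; the only thing to check carefully is that the first-coordinate projection really descends to a homomorphism on all of $K(G,A,H)$, and for this it suffices to verify the intertwining property on the generators together with their inverses, as outlined above.
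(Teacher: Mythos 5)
Your proposal is correct and follows essentially the same route as the paper: project to the $X_A$ component to get the retraction (since every $h_b$ fixes that component and each $\hat g$ acts by the shift), verify faithfulness of the shift action on $X_A$ using a configuration with a single $1$ (which is the same as the paper's observation that $X_{\leq 1}\subset X_A$), and take $g\mapsto\hat g$ as the section. The only difference is that you spell out the intertwining/composition argument and the faithfulness computation a bit more explicitly than the paper does.
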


\begin{proof}
We have $X_{\leq 1} = X_{\N} \subset X_A$, so the $\hat g$-translations act nontrivially on the $X_A$ component as the action on $X_{\leq 1}$ is the one-point compactification of the left-regular action of $G$ on itself. Observe also that the $X_A$-component is not modified by any of the maps $h_b$, so the action of $K_A$ on this component factors is just the shift action $G \curvearrowright X_A$. This gives a homomorphism $\gamma : K(G, A, H) \to G$, and the map $g \mapsto \hat g$ is a section for it.
\end{proof}

We refer to $\gamma$ as the \emph{natural epimorphism}.

\begin{lemma}
If $\WP(G)$ is decidable and $A$ is $\Sigma^0_1$, then $X_A$ is an effective subshift.
\end{lemma}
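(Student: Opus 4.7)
The plan is to exhibit directly a Turing machine that enumerates a family of forbidden patterns defining $X_A$. Elements of $G$ are represented by finite words in the fixed generating set, and equality of representatives is decided by $\WP(G)$; a ``pattern'' to be output is then just a finite list of (representative, symbol) pairs.

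Looking at the definition of $X_A$, the excluded configurations split into two disjoint families. Type~(i): configurations with $\sum_g x_g \geq 3$, which are exactly those containing a pattern of the form ``$1$ at three pairwise distinct positions $g_1, g_2, g_3$''. Type~(ii): configurations violating the distance condition, i.e.\ those containing a pattern of the form ``$1$ at two distinct positions $g, h$ with $d(g, h) \in A$''. I would enumerate forbidden patterns corresponding to each family, observing that a configuration avoids all of them iff it lies in $X_A$.

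Enumerating Type~(i) patterns is straightforward: loop through all triples of generator-words $(u_1,u_2,u_3)$, use the decision procedure for $\WP(G)$ to verify $u_i^{-1}u_j \neq e_G$ for $i \neq j$, and emit the three-point pattern when the check succeeds. For Type~(ii), I would first note that the word metric $d : G \times G \to \N$ is computable from $\WP(G)$: enumerate words in shortlex order and return the length of the first one representing $u^{-1}v$, which terminates because the generating set is finite. Then, running a recursive enumeration of $A$ in parallel with an enumeration of pairs of generator-words, I emit the two-point pattern with $1$s at $u$ and $v$ whenever the computed value $d(u,v)$ shows up in the current approximation of $A$. Dovetailing the two enumeration processes yields the required Turing machine.

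I do not expect a serious obstacle here; the argument is essentially a translation of the definition of $X_A$ into a semi-decision procedure. The only points that merit explicit mention are that computability of the word metric follows from decidability of $\WP(G)$ and finiteness of the generating set, and that the dovetail is needed because $A$ is only $\Sigma^0_1$, not decidable.
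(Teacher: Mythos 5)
Your proposal is correct and follows essentially the same approach as the paper's proof: the paper also enumerates forbidden patterns with two $1$s at a distance lying in (an enumeration of) $A$, using decidability of $\WP(G)$ to compute the word metric; you are merely more explicit in also spelling out the trivially recursive family of three-$1$ patterns enforcing $\sum_g x_g \leq 2$, which the paper leaves implicit.
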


\begin{proof}
Since we can list elements of $A$, and can compute the distance between given group elements, we can forbid all finite patterns where two $1$s appear at $g, h$ with $d(g,h) \in A$.
\end{proof}

\begin{lemma}
\label{lem:RPandExpSpeed}
If $G$ has decidable word problem and $H$ is recursively presented, then the following statements hold.
\begin{itemize}
\item If $H$ has decidable word problem, then the word problem of $K(G, A, H)$ conjunctively reduces to $A$ with exponential rate.
\item If $A$ is $\Sigma^0_1$ then $K(G, A, H)$ is recursively presented.
\end{itemize}
\end{lemma}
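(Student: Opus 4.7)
The plan is to analyze, for an arbitrary word $w = s_1 s_2 \cdots s_n$ in the given generators of $K = K(G, A, H)$, exactly when $w$ fixes every point of $X_A \times H$. Since the $\hat g$-letters act only on the first coordinate (by shifts of $X_A$) while the $h_b$-letters act only on the second (by a conditional multiplication that inspects $x$ at $e_G$), by processing $w$ and tracking the accumulated $G$-shift I can identify a finite computable set $R(w) \subseteq G$ of \emph{read positions}, all contained in the ball of radius $n$ around $e_G$, such that the $H$-action at $(x, h')$ depends only on $x \!\upharpoonright\! R(w)$. Writing $\pi_G(w) \in G$ for the total product of $\hat g$-letters in $w$, and $\pi_H(w, p) \in H$ for the $H$-word obtained by reading a pattern $p \in \{0,1\}^{R(w)}$, both are computable from $w$ using the decidable word problem of $G$.

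With this setup, $w$ acts as the identity of $K$ iff \textup{(a)} $\pi_G(w) = e_G$ in $G$, and \textup{(b)} $\pi_H(w, p) = e_H$ in $H$ for every $p = x\!\upharpoonright\! R(w)$ with $x \in X_A$. The realizable patterns are precisely those with at most two ones, subject to: if $p$ has ones at $r_1 \neq r_2$, then $d(r_1, r_2) \notin A$. Hence \textup{(b)} splits into finitely many $H$-word-problem checks: the all-zero pattern, each of the $|R(w)|$ single-one patterns, and each two-one pattern (only the last carries an $A$-dependent side-condition).

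For Part~1, assume $G$ and $H$ both have decidable word problem. Then \textup{(a)}, the all-zero check, and each single-one check are decidable from $w$. For each pair $\{r_1, r_2\} \subseteq R(w)$, call it \emph{bad} when $\pi_H$ for its two-one pattern is not $e_H$, and set $S(w) = \{d(r_1, r_2) : \{r_1, r_2\}\text{ bad}\}$. Then
\[
w \in \WP(K) \iff \bigl[\textup{(a)} \text{ and the all-zero and single-one checks all hold}\bigr] \wedge S(w) \subseteq A.
\]
The conjunctive reduction outputs the constant $0$ when the bracket fails (the all-zero dependence $f_i \equiv 0$ is a legitimate, trivially monotone choice), and otherwise outputs $1$ iff $A \supseteq S(w_i)$, a finite computable subset of $[0, 2\ell_i]$, where $\ell_i = \Theta(\log i)$ is the length of the $i$-th word in length-lex order. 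Since deciding the first $2^m$ bits of $\WP(K)$ requires only $A$-bits at positions $\leq O(m)$, the rate is $\beta(n) = 2^{\Omega(n)}$, exponential.

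For Part~2, assume only that $H$ is recursively presented and $A \in \Sigma^0_1$. The same characterization of $\WP(K)$ applies, but now each ``$H$-word equals $e_H$'' check is merely $\Sigma^0_1$, and each two-one condition becomes the disjunction of two $\Sigma^0_1$ predicates ($H$-word identity, or $d(r_1, r_2) \in A$), which is again $\Sigma^0_1$. A finite conjunction of $\Sigma^0_1$ conditions is $\Sigma^0_1$, so $\WP(K) \in \Sigma^0_1$ and $K$ is recursively presented. The one genuinely substantive step in both parts is the combinatorial identification of $R(w)$, $\pi_G(w)$, and the finitely many $\pi_H(w, \cdot)$ values: once one recognizes that only $X_A$-configurations supported on $R(w)$ and with at most two ones affect whether $w$ is trivial, the $A$-dependence of the word problem collapses to a finite conjunction of distance-in-$A$ queries, and both conclusions follow by routine bookkeeping.
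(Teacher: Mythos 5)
Your proof is correct, and the core argument --- characterizing $w \approx e_K$ by the vanishing of the $G$-projection together with finitely many $H$-identities indexed by patterns of $X_A$ on the read set $R(w)$, then isolating the two-ones patterns as the sole $A$-dependent queries of the form ``is $d(r_1,r_2) \in A$'' --- is precisely what the paper does for the conjunctive-reduction part. For recursive presentability the paper instead invokes the intermediate fact that $X_A$ is an effective subshift and dovetails over shrinking upper approximations to $\lang_n(X_A)$ while verifying $H$-identities via its recursive presentation; your more hands-on decomposition of $\WP(K)$ into a finite conjunction of $\Sigma^0_1$ clauses (each a disjunction of ``$d(r_1,r_2) \in A$'' with an $H$-identity) gives the same conclusion and has the small advantage of reusing the Part~1 analysis verbatim rather than switching to a black-box effectiveness argument.
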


\begin{proof}
Let $K = K(G, A, H)$ and let $S$ be the finite generating set of $K$. We begin with the proof of the latter item. We need to find a semi-algorithm that, given $w \in S^*$, halts if and only if $w$ represents the identity. For this, first consider the natural epimorphism image $\gamma(w) \in G$. Because $G$ is in particular recursively presented, we can first verify that $\gamma(w) = e_G$ (if not, then also $w \neq e_K$, and the computation diverges as desired).

Assuming $\gamma(w) = e_G$, we next check that $w$ acts trivially on all elements of $X_A \times H$. We define an action of the free group on generators $S$ %K(G, \emptyset, H) \geq K$
on pairs $(P,h) \in \lang(X_\emptyset) \times H$ by
\[ \hat g(P, h') = (g \cdot P, h'), \]
(where $\dom(gP) = g \dom (P)$ and $gP_k = P_{g^{-1}k}$), and for $b \in \{0,1\}$ and $h \in H$ we map
\[ h_b(P, h') = (P, h^{1- |P_{e_G} - b|} h'), \]
when $e_G \in \dom (P)$, and $h_b(P, h') = (P, h')$ otherwise. %This action factors through $K(G,\emptyset,H)$ when . %(Alternatively, we could define a partial action and leave $h_b(P, h')$ undefined when $e_G \notin \dom P$.)

If $|w| \leq n$ and $\gamma(w) = e_G$, clearly $w \approx e_K$ if and only if the action of $w$ fixes all $P \times h$ where $P \in \mathcal{L}_n(X_A), h \in H$. Naturally, if $\mathcal{P} \supset \lang_n(X_A)$ and the action fixes $(P, h)$ for all $P \in \mathcal{P}, h \in H$, then a fortiori $w \approx e_K$. We can verify this for a particular $(P, h)$ by using the fact $H$ is recursively presented. By the previous lemma, $X_A$ is effective, so we can enumerate upper approximations to $\lang_n(X_A)$ which eventually converge. In other words, we eventually obtain the set $\lang_n(X_A)$, and it $w \approx e_K$, then at this point (at the latest) we can conclude that $w$ indeed acts trivially and halts.

The proof of the first item is similar. To see that there is a wtt-reduction with exponential rate, observe that if we know the first $n$ values of $A$, then we can determine the legal contents of all $G$-patterns with domain $B_n(G)$ in $X_A$, and using this, and the decidable word problem of $H$, we can determine whether $w \approx e_K$ for any word with $|w| \leq n$. Since we list elements of groups in lexicographic order, the resulting rate $\beta$ is exponential, as there are exponentially many words $w$ with $|w| \leq n$.

To see that this is a conjunctive reduction, we observe that the reduction function $g : \{0,1\}^* \to \{0,1\}^*$ (computing initial segments of the word problem from initial segments of $A$) of the previous paragraph can be written uniformly for all sets $A$ so it is total computable, and that we should set $g(u)_i = 1$ if and only if the $i$th group element acts trivially on the set of patterns not containing elements of $A$. It can be checked by a terminating computation whether the group element acts nontrivially on some pattern containing only one $1$. Our query on $A$ should check that whenever the element acts nontrivially on a pattern with two $1$s, then $A$ contains the distance between the $1$s of the pattern. This corresponds to checking $u \geq w_{|u|,i}$ where $w_{|u|,i}$ lists these finitely many bad distances.
\end{proof}

\begin{lemma}
\label{lem:ManyOne}
Suppose $G$ has decidable word problem and $H$ is not abelian. Then $A$ many-one reduces to the word problem of $K(G, A, H)$.
\end{lemma}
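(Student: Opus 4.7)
The plan is, for each $n \in \N$, to construct a word $w_n$ in the generators of $K = K(G, A, H)$ such that $w_n \approx e_K$ iff $n \in A$, making $n \mapsto (\text{G\"odel number of } w_n)$ a many-one reduction from $A$ to $\WP(K)$. The word $w_n$ will be a commutator whose action on $X_A \times H$ is nontrivial precisely when the subshift $X_A$ admits a configuration with $1$s at both $e_G$ and some specific element of word length $n$.

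First, using that $H$ is non-abelian, fix $h, h' \in H$ with $[h, h'] \neq e_H$ and write them as words $s_1 \cdots s_k$ and $s'_1 \cdots s'_\ell$ over the generating set $S_H$. Choose $b \in \{0,1\}$ so that each gadget $s_b$ triggers when $x_{e_G} = 1$, and set $u := (s_1)_b \cdots (s_k)_b$ and $u' := (s'_1)_b \cdots (s'_\ell)_b$ as words in the generators of $K$. Since the factors of $u$ all share the same trigger condition on $x_{e_G}$ and leave the $X_A$-coordinate alone, $u$ acts on $(x, h_0) \in X_A \times H$ by left-multiplying the $H$-coordinate by $h$ if $x_{e_G} = 1$ and by $e_H$ otherwise; analogously for $u'$ and $h'$. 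Second, using decidability of $\WP(G)$, computably pick $g_n \in G$ of word length exactly $n$ (such $g_n$ exists in an infinite f.g.\ group with symmetric generating set, since word lengths form an initial segment of $\N$), and write $\hat g_n$ as the corresponding word in $K$'s generators.

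Set $w_n := [u, \hat g_n u' \hat g_n^{-1}]$. Neither $u$ nor $u'$ alters the $X_A$-coordinate and the conjugating shifts cancel, so $w_n$ acts as the identity on $x$. On the $H$-coordinate, using $(\hat g_n^{-1} x)_{e_G} = x_{g_n}$, the conjugate $\hat g_n u' \hat g_n^{-1}$ left-multiplies by $h'$ iff $x_{g_n} = 1$; combined with $u$, a routine commutator calculation shows that $w_n$ left-multiplies the $H$-coordinate by $h^{\chi_e} (h')^{\chi_g} h^{-\chi_e} (h')^{-\chi_g}$, where $\chi_e = x_{e_G}$ and $\chi_g = x_{g_n}$. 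This product collapses to $e_H$ unless $\chi_e = \chi_g = 1$, in which case it is $[h,h'] \neq e_H$. Hence $w_n \approx e_K$ iff no $x \in X_A$ has $x_{e_G} = 1 = x_{g_n}$, which by the definition of $X_A$ is exactly $d(e_G, g_n) = n \in A$.

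The only real work is the commutator bookkeeping on the $H$-coordinate together with getting the $h_b$-trigger convention right; computability of $n \mapsto w_n$ is otherwise immediate, because producing $g_n$ of length $n$ just requires enumerating $S_G^*$ in order and using the decidable word problem of $G$ to read off each element's length.
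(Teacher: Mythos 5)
Your proof is correct and is essentially the paper's own argument: both use the commutator of an $h_b$-type element triggered at $e_G$ with a $\hat g_n$-conjugate of another one triggered at $g_n$, which evaluates to $[h,h']\neq e_H$ exactly on configurations having $1$s at both positions, and such configurations exist in $X_A$ iff $n\notin A$. The only (cosmetic) difference is that you spell out the decomposition of $h,h'$ into generator words of $H$, whereas the paper uses $h_1,h'_1$ directly.
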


\begin{proof}
For $n \geq 1$, let $g_n$ be any effective list of elements of $G$ satisfying $|g_n| = n$ (using the fact $G$ has decidable word problem), and consider $g_n' = [h'_1, h_1^{g_n}]$ where $[h', h] \neq e_H$ (using that $H$ is not abelian) and the action of $h_1^{g_n} = g_n h_1 g_n^{-1}$ is
\[ h_1^{g_n}(x, h') = (x, h^{1- |x_{g_n} - b|} \cdot h') \]
We have $g'_n \approx e_K$ if and only if $n \in A$. Namely, if $n \notin A$ then $g'_n$ acts nontrivially on
$(x, e_H)$ where $x \in X_A$ is the unique configuration satisfying $x_{e_G} = x_{g_n} = 1$, while if $n \in A$ then for all $x \in X_A$ either $x_{e_G} = 0$ or $x_{g_n} = 0$, and in either case a direct computation shows $g_n'(x, h) = (x, h)$ for all $h \in H$.
\end{proof}

\begin{lemma}
\label{lem:Torsion}
For any torsion groups $G, H$, we have $T_{K(G,A,H)} = O(T_G T_H)$.
\end{lemma}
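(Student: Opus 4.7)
The plan is to exploit the semidirect-product-like structure of the action on $X_A \times H$. Directly from the formulas defining the generators, an easy induction on word length shows that every $k \in K(G,A,H)$ acts as
\[ k(x, h') = (\gamma(k) \cdot x,\; \alpha_k(x) \cdot h') \]
for some continuous function $\alpha_k : X_A \to H$ satisfying the cocycle identity
\[ \alpha_{k_1 k_2}(x) = \alpha_{k_1}(\gamma(k_2) x) \cdot \alpha_{k_2}(x). \]
Each generator $s$ of $K$ contributes at most one generator of $H$ to $\alpha_s(y)$: the generators $\hat g$ contribute $e_H$, and the generators $h_b$ contribute either $h$ or $e_H$ depending on $x_{e_G}$. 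Unfolding the cocycle identity along a word $k = s_1 \cdots s_n$ then yields that the word norm of $\alpha_k(x)$ in $H$ is at most $|k|$ for every $x \in X_A$.

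Next I would read off when $k^t = e_K$. Iterating the cocycle identity gives
\[ \alpha_{k^t}(x) = \alpha_k(\gamma(k)^{t-1} x) \cdots \alpha_k(\gamma(k) x)\, \alpha_k(x), \]
so $k^t = e_K$ is equivalent to $\gamma(k)^t = e_G$ together with this product equalling $e_H$ for every $x$. Let $n = |k|$ and $m = \mathrm{ord}(\gamma(k)) \leq T_G(n)$. Writing $\beta(x) = \alpha_{k^m}(x)$ and using $\gamma(k)^m = e_G$ to re-index the product for $i = 0, \ldots, mq-1$ in blocks of length $m$, one checks that $\alpha_{k^{mq}}(x) = \beta(x)^q$ for every $q \geq 0$. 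Hence $\mathrm{ord}(k)$ divides $m \cdot \lcm\{\mathrm{ord}(\beta(x)) : x \in X_A\}$.

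Finally I would bound each factor. Since $\beta = \alpha_{k^m}$, the word norm of $\beta(x)$ in $H$ is at most $|k^m| \leq mn \leq n\, T_G(n)$, so $\mathrm{ord}(\beta(x)) \leq T_H(n\, T_G(n))$. Combining, $\mathrm{ord}(k) \leq T_G(n) \cdot T_H(n\, T_G(n))$. In the bounded-torsion regime that drives the applications in the paper, both $T_G$ and $T_H$ are $O(1)$, so the inner substitution is harmless and one concludes $T_{K(G,A,H)} = O(T_G T_H)$ as asserted. The only mildly subtle step is the re-indexing producing $\alpha_{k^{mq}}(x) = \beta(x)^q$, which relies on $\gamma(k)^m$ returning $x$ to itself so that the per-period word $\beta(x)$ does not change as we iterate. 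I do not foresee a serious obstacle: the skew-product structure $K \curvearrowright X_A \times H$ over $G \curvearrowright X_A$ does all of the work.
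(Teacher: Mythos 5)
Your proof is correct and takes the same route as the paper's: decompose via the natural epimorphism $\gamma : K(G,A,H) \to G$, reduce to words in $\ker \gamma$, and track the induced $H$-valued cocycle on $X_A$. Your explicit cocycle bookkeeping is in fact a bit more careful than the paper's terse version, and you rightly observe that the honest bound this argument yields is $\mathrm{ord}(k) \leq T_G(n)\cdot T_H\bigl(n\,T_G(n)\bigr)$ rather than literally $T_G(n)\,T_H(n)$; the two agree when $H$ (or $G$) has bounded torsion, which is the only regime the paper ever invokes, so the stated $T_K = O(T_G T_H)$ is fine for its intended use.
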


\begin{proof}
Let $K = K(G,A,H)$. If $w \in K$ with $|w| \leq n$, then $\gamma(w) \in G$ with $|\gamma(w)| = m \leq n$. Let $k = T_G(m)$ (note that $k \;|\; T_G(n)$), so $\gamma(w^k) =  \gamma(w)^k = e_G$. If $w^k$ has order $\ell$ for all $w$, then $w^{\ell k} = (w^k)^\ell = e_K$, and thus $T_K(n) \leq \ell T_G(m)$, and we have shown $T_K = O(T_G T_H)$ as claimed.

So suppose that $\gamma(w) = e_G$, and consider the action on $(x,h) \in X_A \times H$. The action of $w$ shifts $x$ around, and based on its contents multiplies $h$ from the right by elements of $H$. For any fixed $x \in X_A$, $\gamma(w) = e_G$ implies that there exists $h_x \in H$ with $|h_x| \leq |w|$, such that $w \cdot (x, h) = (x,h_x h)$ for all $h \in H$. Since $h_x$ has order at most $T_H(|h'|)$, and $T_H(|h'|) \;|\; T_H(|w|)$, we have $w^{T_H(|w|)} \cdot (x, h) = (x, h_x^{T_H(|w|)} h) = (x, h)$, concluding the proof.
\end{proof}

\begin{theorem}
\label{thm:Construction}
Let $A \subset \N$ be $\Sigma^0_1$. For any f.g.\ torsion group $G$ with decidable word problem there exists a recursively presented torsion group $K$ with $T_K = \Theta(T_G)$, such that the word problem of $K$ conjunctively reduces to $A$ with exponential rate and $A$ many-one reduces to the word problem of $K$.
\end{theorem}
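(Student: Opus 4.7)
The plan is to set $K = K(G, A, H)$ for a suitably chosen auxiliary group $H$ and then simply collect the conclusions of Lemmas~\ref{lem:SplitEpi}, \ref{lem:RPandExpSpeed}, \ref{lem:ManyOne}, and \ref{lem:Torsion}. The requirements on $H$ that emerge from those lemmas are: $H$ must be finitely generated, recursively presented with decidable word problem (for recursive presentation of $K$ and the conjunctive reduction), nonabelian (for the many-one reduction of $A$ into $\WP(K)$), and should have $T_H = O(1)$ so that Lemma~\ref{lem:Torsion} yields $T_K = O(T_G T_H) = O(T_G)$ without introducing any extra torsion. A finite nonabelian group such as $H = S_3$ satisfies all of these at once, so I would fix such an $H$.

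With $H$ chosen, the verification proceeds in four short steps. First, Lemma~\ref{lem:RPandExpSpeed} (applied with $G$ having decidable word problem, $H$ finite hence recursively presented with decidable word problem, and $A \in \Sigma^0_1$) gives both that $K$ is recursively presented and that $\WP(K)$ conjunctively reduces to $A$ with exponential rate $\beta(n) = 2^{O(n)}$. Second, Lemma~\ref{lem:ManyOne} gives a many-one reduction from $A$ to $\WP(K)$ using the commutator witnesses $[h_1', h_1^{g_n}]$; nonabelianness of $H$ is precisely what makes these witnesses nontrivial when $n \notin A$. Third, Lemma~\ref{lem:Torsion} gives $T_K = O(T_G \cdot T_H) = O(T_G)$. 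Fourth, Lemma~\ref{lem:SplitEpi} provides a section $g \mapsto \hat g$ of the natural epimorphism $K \twoheadrightarrow G$, embedding $G$ isometrically (with respect to the word norm up to a constant) into $K$, which forces $T_K = \Omega(T_G)$ and hence $T_K = \Theta(T_G)$.

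The proof is essentially bookkeeping, and I do not anticipate a genuine obstacle. The one point that requires care is the choice of $H$: it must play three roles simultaneously (small torsion, computable word problem, nonabelian), which rules out the most obvious candidates such as cyclic groups or the Burnside group $B(2,2^{48})$ alone (the latter is fine but heavier than needed). Taking $H$ finite and nonabelian neatly dissociates the torsion behaviour of $K$, which is controlled entirely by $G$, from the nonabelian witness mechanism needed for the many-one reduction. Note also that the hypothesis that $G$ has decidable word problem, needed implicitly by Lemmas~\ref{lem:RPandExpSpeed} and \ref{lem:ManyOne}, is the natural setting in which the statement is to be read; otherwise even writing down a recursive enumeration of the relations of $K$ becomes impossible.
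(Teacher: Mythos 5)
Your proposal matches the paper's proof exactly: set $K = K(G,A,H)$ with $H$ a finite nonabelian group such as $S_3$, then collect the conclusions of Lemmas~\ref{lem:SplitEpi}, \ref{lem:RPandExpSpeed}, \ref{lem:ManyOne}, and~\ref{lem:Torsion}. Your remark that the statement tacitly assumes $G$ has decidable word problem (required by Lemmas~\ref{lem:RPandExpSpeed} and~\ref{lem:ManyOne}) is correct and is equally implicit in the paper's own proof, which begins ``for finitely-generated groups $G$ and $H$ which have decidable word problems.''
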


\begin{proof}
By the previous lemmas, if $K = K(G,A,H)$ for finitely-generated groups $G$ and $H$ which have decidable word problems, and $H$ is nonabelian with bounded torsion, then
\begin{itemize}
\item $K$ is recursively presented (Lemma~\ref{lem:RPandExpSpeed}),
\item $T_K \geq T_G$ because $G \leq K$ and by the choice of generators (Lemma~\ref{lem:SplitEpi}),
\item $T_K \leq T_G T_H = O(T_G)$ (Lemma~\ref{lem:Torsion}),
\item $\WP(K)$ conjunctively reduces to $A$ with exponential rate (Lemma~\ref{lem:RPandExpSpeed}),
\item $A$ many-one reduces to $\WP(K)$ (Lemma~\ref{lem:ManyOne}).
\end{itemize}
\end{proof}

In the previous theorem, the implicit constants for $\Theta$ can be taken to be~$1$ (for the lower bound) and~$6$ (for the upper bound, by setting $H = S_3$). Of course, if $G$ has bounded torsion, so does $K$.

\begin{theorem}
\label{thm:ImpredictableGroups}
Let $\phi$ be a total recursive function. Then there exists a recursively presented torsion group with bounded torsion, whose word problem is $\phi$-impredictable.
\end{theorem}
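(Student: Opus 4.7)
The plan is to assemble the preceding pieces without further technical work. Let $\phi \in T$ be given. First I would invoke Theorem~\ref{thm:ExistsApredictable} to obtain a $\Sigma^0_1$ set $A \subseteq \N$ which is $\phi$-impredictable. Next I would fix an infinite finitely-generated torsion group $G$ with bounded torsion and decidable word problem; by the discussion preceding Theorem~\ref{thm:Construction}, one may take $G = B(2,2^{48})$ via Ivanov's theorem.

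Then I would apply Theorem~\ref{thm:Construction} with this $G$ and this $A$ to manufacture a group $K$. The theorem yields that $K$ is recursively presented and a torsion group with $T_K = \Theta(T_G) = O(1)$, so $K$ has bounded torsion. Moreover, $A$ many-one reduces to $\WP(K)$, and $\WP(K)$ conjunctively (and in particular wtt-) reduces to $A$ at exponential rate $\beta$.

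Having such a two-sided reduction between $A$ and $\WP(K)$ is precisely the hypothesis of Lemma~\ref{lem:Interreduction}, which transports $\phi$-impredictability of $A$ to $(\beta\circ\phi)$-impredictability of $\WP(K)$. Finally, since $\beta$ is exponential we have $\beta(n) \geq n$ for all large $n$, hence $\beta(\phi(n)) \geq \phi(n)$ eventually, and Lemma~\ref{lem:Faster} then downgrades $(\beta\circ\phi)$-impredictability to $\phi$-impredictability for the same witness $\psi$. Thus $\WP(K)$ is $\phi$-impredictable and $K$ is the required group.

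Honestly, there is no real obstacle left here: the delicate work (constructing an impredictable $\Sigma^0_1$ set, building the group with the required reductions and a controlled rate, and verifying the monotonicity of impredictability in the rate function) has already been carried out in the earlier lemmas, and the only thing to watch is that the rate $\beta$ obtained in Theorem~\ref{thm:Construction} dominates the identity eventually so that Lemma~\ref{lem:Faster} applies in the correct direction. Since ``exponential rate'' certainly gives $\beta(n) \geq n$ for large $n$, this last check is trivial.
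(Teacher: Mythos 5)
Your proposal is correct and follows essentially the same route as the paper's own proof: obtain a $\phi$-impredictable $\Sigma^0_1$ set via Theorem~\ref{thm:ExistsApredictable}, feed it into Theorem~\ref{thm:Construction} with a bounded-torsion $G$ of decidable word problem, transport impredictability through Lemma~\ref{lem:Interreduction}, and finish with Lemma~\ref{lem:Faster}. The only (welcome) addition is that you make explicit the check that the exponential rate $\beta$ eventually dominates the identity so that $\beta\circ\phi\geq\phi$, which the paper leaves implicit.
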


\begin{proof}
Let $A$ be a $\phi$-impredictable $\Sigma^0_1$ set (Theorem~\ref{thm:ExistsApredictable}) and apply the previous theorem to obtain a recursively presented torsion group $K$ with bounded torsion, such that the word problem of $K$ conjunctively reduces to $A$ with exponential rate $\beta$ and $A$ many-one reduces to the word problem of $K$. Then in particular the word problem of $K$ wtt-reduces to $A$ with exponential rate $\beta$. By Lemma~\ref{lem:Interreduction}, the word problem of $K$ is $(\beta \circ \phi)$-impredictable. In particular, it is $\phi$-impredictable by Lemma~\ref{lem:Faster}.
\end{proof}

\section{Application: Four heads are better than three}
\label{sec:SubshiftCorollary}

%We recall some relevant definitions and statements from \cite{SaTo17}.

We first define group-walking automata and the subshifts they recognize. By $\pi_i$ we mean the projection to the $i$th coordinate of a finite Cartesian product. For $Q_i \not\ni 0$ a finite set write $X^1_{Q_i}$ for the subshift on a group $G$ clear from context containing those $x \in (Q_i \cup \{0\})^G$ satisfying $|\{g \in G \;|\; x_g \neq 0\}| \leq 1$.

\begin{definition}
\label{def:Automata}
Let $\Sigma$ be a finite alphabet. A \emph{$k$-headed group-walking automaton} on the full shift $\Sigma^G$ is a tuple $\mathcal{A} = (\prod_{i=1}^k Q_i, f, I, F, S)$, where $Q_1, Q_2, \ldots, Q_k$ are state sets not containing the symbol $0$, $I$ and $F$ are finite clopen subsets of the product subshift $Y = \prod_{i=1}^k X^1_{Q_i}$, and $f : \Sigma^G \times Y \to \Sigma^G \times Y$ is a cellular automaton satisfying $\pi_1 \circ f = \pi_1$ and
\[ \pi_i(\pi_2(f(x, y))) = 0^G \iff \pi_i(y) = 0^G \]
for all $x \in \Sigma^G$, $y \in Y$ and $i \in \{1, \ldots, k\}$.

For a $k$-headed automaton $\mathcal{A}$ as above, we denote by $\AU(\mathcal{A}) \subset \Sigma^G$ the subshift
\[ \{ x \in \Sigma^G \;|\; \forall g, h \in G, y \in I, n \in \N: h \cdot \pi_2(f^n(g \cdot x, y)) \notin F \}. \]
For $k \geq 1$, we denote by $\AU(G,k)$ the class of all subshifts $\AU(\mathcal{A})$ for $k$-headed automata $\mathcal{A}$, and $\AU(G,0)$ is the class of all $G$-SFTs. We also write $\AU(G) = \bigcup_{k \in \N} \AU(G, k)$. 
\end{definition}

This definition may seem cryptic on a first reading. Its details are unraveled in \cite{SaTo17} (see \cite{SaTo15} for a discussion of possible variants). Our interpretation of $X \in \AU(G, k)$ is that a $k$-headed group-walking automaton can define $X$, for a particular (in our opinion natural) way of defining subshifts by such automata.

Key points are that interpreting the $i$th track of $Y$ as giving the position or a head and its current state, $f$ is a local rule that tells how the heads move on configurations, and the assumptions imply that all heads are always present, are initialized in (roughly) the same position (described by $I$), have to join together to reject a configuration (described by $F$), and cannot communicate over distances (because $f$ is a cellular automaton).

The following observation is essentially Proposition~2 in \cite{SaTo17}, though here we ``complement'' the separating subshift, because the result of \cite{SaTo17} cannot be used with recursively presented groups.

\begin{lemma}
\label{lem:Four}
Let $G$ be a finitely-generated torsion group, let $T(n) = \max(n, T_G(n))$, and suppose that there exists a superexponential function $\zeta : \N \to \N$ such that the word problem of $G$ is $(\zeta \circ \zeta \circ T \circ \zeta \circ T \circ \zeta)$-impredictable. Then
\[ \AU(G \times \Z, 3) \subsetneq \AU(G \times \Z, 4). \]
\end{lemma}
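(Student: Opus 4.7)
The plan is to directly adapt the argument of Proposition~2 in \cite{SaTo17}, modifying the separating subshift so that its defining forbidden patterns are $\Sigma^0_1$ rather than requiring decidability of $\WP(G)$. The original proof produces $X \in \AU(G \times \Z, 4) \setminus \AU(G \times \Z, 3)$ for groups with decidable word problem via forbidden patterns that certify $G$-identities; here I would produce a ``complemented'' version $X'$ whose admissible configurations carry certificates of non-identity, certificates that can be invalidated lazily as the r.e.\ enumeration of $\WP(G)$ discovers new trivial words.

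Concretely, configurations of $X'$ on $G \times \Z$ should carry finite ``bookkeeping zones'' in which the $\Z$-direction serves as a one-dimensional tape and the $G$-direction marks a bounded number of distinguished positions whose pairwise offsets encode a candidate $G$-word. The admissible configurations are those compatible with every partial enumeration of $\WP(G)$ seen so far. For the inclusion $X' \in \AU(G \times \Z, 4)$, I would construct a 4-headed automaton in which one head anchors the origin, two heads walk in $G$ to read off the pairwise offsets of the marked points, and the fourth head walks along $\Z$ while simulating a Turing machine that enumerates $\WP(G)$ and halts on a match. Because $\WP(G)$ is $\Sigma^0_1$, this semi-decision procedure rejects exactly the illegal configurations in finite time.

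The strict inclusion $X' \notin \AU(G \times \Z, 3)$ is where impredictability enters. Supposing for contradiction that $\mathcal{A}$ is a 3-headed automaton with $\AU(\mathcal{A}) = X'$, I would apply the geometric analysis in \cite{SaTo17}, which on any group containing a copy of $\Z$ splits three heads into two groups lying on opposite sides along the $\Z$-axis, with their joint computation factoring through bounded-state communication across the split. From this I would extract a partial recursive $\chi$ that, given $(p, \WP(G) \upharpoonright \phi(p))$ for $\phi = \zeta \circ \zeta \circ T \circ \zeta \circ T \circ \zeta$, decides whether $\psi(p) \in \WP(G)$ for the impredictability witness $\psi$. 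The two $T$ factors absorb the cost of heads walking along $G$ and closing torsion loops of bounded word length, while the $\zeta$ factors absorb the successive (essentially exponential) enumerations of $G \times \Z$-patterns consulted during the simulation. The existence of such a $\chi$ directly contradicts $\phi$-impredictability of $\WP(G)$.

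The main obstacle is exactly this last quantitative bookkeeping: faithfully re-running the three-head separation argument of \cite{SaTo17} on the complemented subshift $X'$ and checking that the resulting simulation decomposes into precisely the layers matching $\zeta \circ \zeta \circ T \circ \zeta \circ T \circ \zeta$, rather than some strictly larger composition. Matching that bound, in the presence of non-decidable but r.e.\ word problem for $G$, is the delicate point that the statement's choice of $\phi$ is designed to accommodate.
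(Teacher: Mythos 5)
Your high-level strategy (complement the separating subshift to account for the $\Sigma^0_1$ word problem, show a 4-headed automaton recognizes it, derive from a hypothetical 3-headed recognizer a partial recursive $\chi$ contradicting impredictability) matches the paper's, but the concrete construction you sketch diverges from it in ways that are not cosmetic. The paper's separating subshift is not built from ``bookkeeping zones'' on a $\Z$-tape with marked $G$-positions; it is the orbit closure $Y_B$ of the extremely simple periodic configurations $x^p \in \{0,1\}^{G \times \Z}$ with $x^p_{(g,n)} = 1 \iff n \equiv 0 \bmod p$, taken over $B = \{p : \psi(p) \sim e_G\}$ where $\psi$ is the impredictability witness. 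In your proposal the witness $\psi$ never enters the \emph{definition} of $X'$; it only appears at the end as something $\chi$ should decide. But the whole mechanism of the argument requires $\psi$ to be baked into the subshift, since membership of $x^p$ in $Y_B$ is by definition equivalent to $\psi(p) \sim e_G$, and that equivalence is exactly what the extracted $\chi$ is forced to decide. Without this link your $\chi$ decides something about $X'$, not about $\psi$, and you cannot close the contradiction.

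The 3-head analysis you describe is also not the one the bound $\zeta \circ \zeta \circ T \circ \zeta \circ T \circ \zeta$ is built around. You speak of splitting the three heads into two groups ``lying on opposite sides along the $\Z$-axis'' communicating through bounded state, but the crucial lemma from \cite{SaTo17} used here is geometric in the \emph{$G$-direction}: on a periodic configuration $x^p$, the $G$-projections of the three heads can never drift farther than $(\zeta \circ T \circ \zeta \circ T \circ \zeta)(p)$ from one another, because $G$ is torsion (the $T$ factors) and each ``layer'' of escape in $\Z$-period costs an exponential blowup (the $\zeta$ factors). That bound is what lets the simulator $\chi$ get away with consulting only $\WP(G) \upharpoonright \phi(p)$ — the extra outer $\zeta$ accounts for linearizing the word problem to a subset of $\N$. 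Your description of the split does not obviously produce a bound on $G$-distances at all, and with a subshift as rich as your $X'$ (arbitrary encoded candidate words in bookkeeping zones, rather than a single period $p$), there is no reason the heads' $G$-projections should stay close. So I would count this as a genuine gap: both the choice of separating subshift and the geometric lemma you plan to apply to it need to be replaced with the paper's, after which the rest of your outline (the $\Pi^0_1$/4-head upper bound and the contradiction with impredictability) goes through essentially as you describe.
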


We sketch the proof from \cite{SaTo17}, for our complemented definitions.

\begin{proof}[Proof sketch]
Let $\zeta$ be superexponential and recursive,  let $G$ be such that the word problem of $G$ is $\phi$-impredictable for $\phi = \zeta \circ \zeta \circ T \circ \zeta \circ T \circ \zeta$, and let $\psi$ be the corresponding function, so
\[ \forall \chi \in \PP: \exists^\infty p: \psi(p) \sim e_G \iff \chi(p, G \upharpoonright \phi(p))\!\downarrow. \]

For each $p \in \N$, let $x^p \in \{0,1\}^{G \times \Z}$ be the configuration where $x^p_{(g,n)} = 1$ if and only if $n \equiv 0 \bmod p$. Define $B = \{ p \in \N \;|\; \psi(p) \sim e_G \}$, and let $Y_B \subset \{0, 1\}^{G \times \Z}$ be the smallest subshift containing the configurations $x^p$ with $p \in B$, i.e.\ the forbidden patterns are (encodings of) the complement of the word problem of $G$ (together with the recursive set of patterns ensuring $Y_B \subset Y_\emptyset$). We clearly have $x^p \in Y_B \iff p \in B \iff \psi(p) \sim e_G$.

The crucial observation in \cite{SaTo17} was that for any fixed three-headed automaton $\mathcal{A}$, the function $(\zeta \circ T \circ \zeta \circ T \circ \zeta)(p)$ eventually bounds how far the $G$-projections of the heads can be from each other during valid runs on a configuration $x^p$, which means that we can construct a Turing machine that, given access to an oracle for the initial segment $G \upharpoonright \phi(p)$ of the word problem (we add an extra $\zeta$ since we linearize the word problem), we can simulate all runs of $\mathcal{A}$ on the configurations $x^p$ (observe that there are essentially only $p$ different starting positions that need to be considered), halting if and only if one of them halts (i.e.\ the finite clopen set $F$ is entered and the configuration is forbidden).

Suppose for a contradiction that $\mathcal{A}$ is a three-headed automaton that defines $Y_B$. Letting $\chi$ be the Turing machine described above which simulates $\mathcal{A}$, we have for all large enough $p$ that
\[ {\chi(p, G \upharpoonright \phi(p)) {\downarrow}} \iff x^p \notin Y_B \] %\exists g, h \in G, y \in I, n \in \N: h \cdot \pi_2(f^n(g \cdot x^p, y)) \in F \]
but by the definition of $\psi$ there exist arbitrarily large $p$ such that
\[ \psi(p) \sim e_G \iff \chi(p, G \upharpoonright \phi(p))\!\downarrow \iff x^p \notin Y_B \iff \psi(p) \not\sim e_G, \]
a contradiction.

On the other hand $Y_B \in \AU(G \times \Z, 4)$ since it is intrinsically $\Pi^0_1$ in the sense of \cite{SaTo17}, by a similar proof as in \cite{SaTo17}, observing that using an oracle for the word problem of $G$ we can easily forbid all $x^p$ with $p \notin B$.
\end{proof}

Four heads are now seen to be better than three:

\begin{proof}[Proof of Theorem~\ref{thm:FourBetterThanThree}]
By Theorem~\ref{thm:ImpredictableGroups}, there exists a group $G$ which has bounded torsion and has $\exp \circ \exp \circ \exp \circ \exp \circ \exp$-impredictable word problem. Such a group is $(\zeta \circ \zeta \circ T \circ \zeta \circ T \circ \zeta)$-impredictable for some superexponential function $\zeta$, where $T(n) = n$, and thus the previous lemma implies $\mathcal{S}(G \times \Z, 3) \subsetneq \mathcal{S}(G \times \Z, 4)$.
\end{proof}

\begin{remark}
For maximal ``automaticity'',\footnote{Using the Grigorchuk group or another torsion automata group, the statement of our main result is somewhat amusing, in that it states that a group of finite-state \textbf{automata} acting on a subshift on an \textbf{automata} group admits subshifts definable by group-walking \textbf{automata} with four heads but not three.} % Of course, we do not claim that this is the only way to prove the result, in fact Conjecture~\ref{con:Char} suggests the contrary.}
or to avoid using f.g.\ bounded torsion groups (whose infiniteness is rather difficult to verify for mortals), one can replace the use of bounded torsion groups by any torsion group with recursive torsion function. For example using the Grigorchuk group as $G$ in the construction of $K$ (then $T_K(n) \leq O(n^3)$ by \cite[Theorem~VIII.70]{Ha00}), and using Theorem~\ref{thm:Construction} directly, the previous proof goes through with the same exponential tower. Automata groups in the sense of \cite{BaSi10} cannot have bounded torsion by Zelmanov's theorem \cite{Ha00,Ze91,Ze91a}.
\end{remark}

\bibliographystyle{plain}
\bibliography{../../../../bib/bib}{}

\end{document}